
\documentclass[11pt]{article}
\usepackage{geometry}   
\geometry{total={7in,9in},letterpaper}                
\usepackage{graphicx,amsthm,pstricks-add,matnot}
\usepackage{amssymb,amsmath,enumitem,xcolor,textcomp,multicol}

\usepackage[square,numbers]{natbib}

\newtheorem{definition}{Definition}
\newtheorem{theorem}{Theorem}
\newtheorem{lemma}{Lemma}

\author{Peter Staab, Charles Fisher, Mark Maggio, Michael Andrade, Erin Farrell and Haley Schilling}
\title{The Magic of Permutation Matrices:  Categorizing, Counting and Eigenspectra of Magic Squares}

\begin{document}
\maketitle

\section{Introduction}

A magic square is an arrangement of a set of integers into a square array such that each row, column and the two diagonals add to the same value called the magic number.  Magic squares generally fall into the realm of recreational mathematics \cite{Pasles:2008,pickover:2002}, however a few times in the past century and more recently, they have become the interest of more-serious mathematicians.  In this paper we explore how permutation matrices play a key role in categorizing and enumerating magic squares.  Also, we generalize the results of Mattingly \cite{mattingly:2000} to find the determinant and eigenspectra of sets of magic squares that have various symmetries.

Throughout the extensive history of magic squares (see \cite{Pasles:2008}), the main interest lay in their construction.  Only in the past 60 years has analysis of magic squares as matrices been performed.   In 1917, Henry Ernest Dudeney \cite{dudeney:1917} classified all of the 4 by 4 magic squares, mainly by symmetries.  Figure \ref{fig:order4-class} (shown on the following page) shows Dudeney's classification scheme.  There are 12 classifications of magic squares of this size and the arcs shown in the figure for each type square signify the two cells that add to half the magic number. For example, the \textbf{D\"urer Magic Square}, named after Albrecht D\"urer (1471--1526), is 
\begin{align}
\begin{tabular}{|c|c|c|c|} \hline
16 &	3  &	2  &	13 \\ \hline
5 &	10 & 	11  &	8 \\ \hline
9 & 	6  &	7  &	12 \\ \hline
4  &	15 & 	14  &	1 \\ \hline 
\end{tabular}
\label{durer:magic:square}
\end{align}
and it can be classified as a type III (or regular) magic square.  This can be seen by adding up elements that are symmetric about the center or are linked in the Dudeney diagram.   Each symmetric pair adds to 17 (half the magic number of 34).  

\begin{figure}
\begin{center}
\includegraphics[width=3in,angle=270]{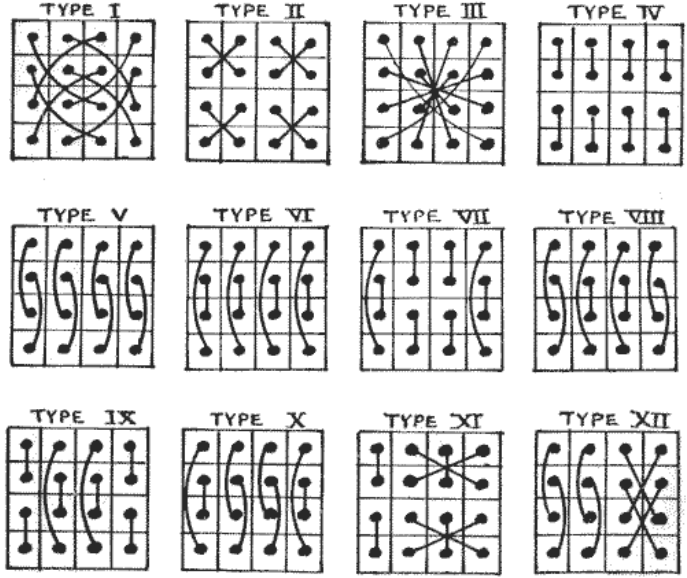}
\end{center}
\caption{The classification scheme of Henry Ernest Dudeney \cite{dudeney:1917} for 4 by 4 natural magic squares.  The arcs denote the pairs of cells that add up to half the magic number.  In this case, the magic number is 34. We will show that magic square of types I--VI, XI and XII satisfy equations containing permutation matrices and that generalized versions of types I--VI are singular. } 
\label{fig:order4-class}
\end{figure}

In 1948, C. W. Trigg provided \cite{trigg:1948} a way to calculate the determinant of each magic square in Dudeney's classification.  In particular, he showed that each one of the 4 by 4 magic squares of Dudeney's type I--VI has determinant zero. In 2000, R. Bruce Mattingly, proved \cite{mattingly:2000} that not only is the determinant of each type III or regular 4 by 4 magic square equal to zero, but this is true for any even-sized regular magic square.

Mattingly's analysis used the reverse matrix to define a regular magic square, and used the definition to show some interesting properties about regular even-ordered magic squares.  We expand the analysis to a larger set.  In fact, we generalize Dudeney's type I--VI to higher-order magic squares and prove that they are all singular and show many properties of their eigenspectra.  

\section{Mathematical Background} \label{sect:math:back}

It is mathematically convenient to define magic squares in a more precise manner than discussed above.  First, let the column vector $\be_i$ be the standard basis vector in $\mathbb{R}^n$, the vector with all zeros except the $i$th element, which is one.  Also, let $\be$ be the vector in $\mathbb{R}^n$ with every element 1,  and let $\bJ$ be the square matrix of size $n$ defined by
\begin{align}
\bJ_{i,j} =\begin{cases}
	 1, & \text{if $i+j=n+1$}, \\
	 0, & \text{otherwise}.
\end{cases}
\label{def:jmatrix}
\end{align}

The matrix $\bJ$, the identity matrix reflected either horizontally or vertically, is often called the \emph{reverse matrix} since it is useful for reversing rows or columns of vectors and matrices.  If $\bA$ is a matrix, then $\bJ \bA$ is the matrix $\bA$ reflected vertically and $\bA \bJ$ is $\bA$ reflected horizontally.  A specific example of this is shown below.  

Since we are interested in understanding how permutation matrices transform magic squares, we define a magic square using matrix operations. 

\begin{definition}
	A \textbf{magic square of order} $\mathbf{n}$ is an $n$ by $n$ matrix $\bA$ consisting of real numbers with the following properties:
\begin{align}
\be_i^T \bA \be & = \mu, ~~~~~ \text{for $i=1,2, \ldots, n$,} \label{def:magic:square:row} \\
\be^T \bA \be_j &  = \mu,~~~~~ \text{for $j=1,2, \ldots, n$, }
\label{def:magic:square:col} \\
\tr(\bA) & = \mu, \label{def:magic:square:diag} \\
\tr(\bJ\bA) & = \mu,  \label{def:magic:square:altdiag}  
\end{align}
where $\mu$ is called the \textbf{magic number}.

If the numbers used in the magic square are the consecutive integers, $1,2, \ldots, n^2$, then $\mu=n(n^2+1)/2$ and the magic square is called \textbf{natural}.  
\end{definition}

Equations (\ref{def:magic:square:row}) and (\ref{def:magic:square:col}) are the properties that the rows and columns respectively must sum to $\mu$.  Equations (\ref{def:magic:square:diag}) and (\ref{def:magic:square:altdiag}) are the properties that main and minor diagonals must also sum to $\mu$.  Writing the definition in this way allows proofs to proceed in a clearer manner.  


Although the numbers within magic squares have typically been defined on the integers (often it is important that they are non-repeating as will be indicated), the definition above and many of the results that follow are generally applied to magic squares on the reals.  

Since some of the results are applicable to semi-magic squares, we define that here.  

\begin{definition}
 A \textbf{semi-magic square} is a $n$ by $n$ matrix $\bA$ of reals that satisfies (\ref{def:magic:square:row}) and (\ref{def:magic:square:col}).   If the elements of the matrix consist of a reordering of the numbers $1,2,\ldots,n^2$, then the result is called a \textbf{natural semi-magic square}.  
\end{definition}

\subsection{Symmetries of Matrices}

As is standard, a matrix $\bA$ is symmetric if $\bA=\bA^T$.  Permutation matrices, and in particular permutation matrices with particular symmetries, play an important role in this study.  

\begin{definition}
A square matrix $\bA$ is \textbf{persymmetric} if it satifies $\bJ \bA = (\bJ \bA)^T$.  A square matrix is \textbf{bisymmetric} if it is both symmetric and persymmetric.  A square matrix is called \textbf{singly symmetric} if it is either symmetric or persymmetric but not both.    
\end{definition}

The term persymmetric means that it is symmetric with respect to the minor diagonal.  A result of these is that $\bJ\bA\bJ=\bA^T$, which can be interpreted as the property that rotating the matrix halfway around is equal to its transpose.  We will investigate bisymmetric permutation matrices in the next section. 

Another class of permutation matrices that play a role in magic squares are those with a 90\textdegree\, rotational symmetry.  For example, 
\begin{align}
\begin{bmatrix}
0 & 1 & 0 & 0 \\
0 & 0 & 0 & 1 \\
1 & 0 & 0 & 0 \\
0 & 0 & 1 & 0 
\end{bmatrix} \label{eq:rot90:perm:ex}
\end{align}
is such a matrix.  More formally, we define the following:
\begin{definition}
If a matrix $\bA$ satisfies $\bA^T = \bJ\bA$, then $\bA$ is said to be \textbf{90\textdegree-symmetric}.  
\end{definition}

We will see later that permutation matrices that are bisymmetric or 90\textdegree-symmetric transform magic squares into other magic squares.

\subsection{Magic Squares formed by rotating and reflecting}

It is a standard known ``fact'' that for each magic square, there are an additional 7 magic squares found by reflecting and rotating the original one.  Most who study magic squares consider the eight related squares as equivalent or isomorphic.  However, if thought of as a matrix, then these 8 magic squares are different but still all magic squares.  As has been seen, using matrix operations is important and the eight related  magic squares can be written as $\bA,$ $\bJ\bA$, $\bA\bJ$, $\bJ\bA\bJ$, $\bA^T$, $\bJ \bA^T$, $\bA^T\bJ$, $\bJ \bA^T \bJ$. 

For example, if $\bA$ is the D\"urer magic square in (\ref{durer:magic:square}) and shown below, then 
\begin{align*} 
\bA & = \begin{bmatrix}
16 & 3 & 2 & 13 \\
5 & 10 & 11 & 8 \\
9 & 6 & 7 & 12 \\
4 & 15 & 14 & 1
\end{bmatrix}, & 
\bJ\bA & = \begin{bmatrix}
4 & 15 & 14 & 1 \\
9 & 6 & 7 & 12 \\
5 & 10 & 11 & 8 \\
16 & 3 & 2 & 13
\end{bmatrix}, & 
\bA \bJ & = \begin{bmatrix}
13 & 2 & 3 & 16 \\
8 & 11 & 10 & 5 \\
12 & 7 & 6 & 9 \\
1 & 14 & 15 & 4
\end{bmatrix}, \\ 
\bJ\bA\bJ & = \begin{bmatrix}
1 & 14 & 15 & 4 \\
12 & 7 & 6 & 9 \\
8 & 11 & 10 & 5 \\
13 & 2 & 3 & 16
\end{bmatrix}, &
\bA^T & = \begin{bmatrix}
16 & 5 & 9 & 4 \\
3 & 10 & 6 & 15 \\
2 & 11 & 7 & 14 \\
13 & 8 & 12 & 1
\end{bmatrix}, &
\bJ \bA^T & = \begin{bmatrix}
13 & 8 & 12 & 1 \\
2 & 11 & 7 & 14 \\
3 & 10 & 6 & 15 \\
16 & 5 & 9 & 4
\end{bmatrix}, \\
 \bA^T \bJ & = \begin{bmatrix}
4 & 9 & 5 & 16 \\
15 & 6 & 10 & 3 \\
14 & 7 & 11 & 2 \\
1 & 12 & 8 & 13
\end{bmatrix}, &
\bJ \bA^T \bJ & = \begin{bmatrix}
1 & 12 & 8 & 13 \\
14 & 7 & 11 & 2 \\
15 & 6 & 10 & 3 \\
4 & 9 & 5 & 16
\end{bmatrix}, 
\end{align*}
are the unique rotations and reflections of the D\"urer magic square.  Starting with any magic square of any order, it may seem obvious that these 7 transforms result in a magic square.   We will show this below, not to prove an obvious result, but mainly to illustrate how the definition of the magic square in (\ref{def:magic:square:row})--(\ref{def:magic:square:altdiag}) is useful in such proofs.  

\begin{theorem} \label{thm:8magics}
	If $\bA$ is a magic square then following are also magic squares: $\bJ\bA$, $\bA\bJ$, $\bJ\bA\bJ$, $\bA^T$, $\bJ \bA^T$, $\bA^T\bJ$, $\bJ \bA^T \bJ$. 
\end{theorem}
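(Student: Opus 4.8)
The plan is to exploit the group structure behind the eight transforms rather than verify all seven cases independently. First I would record the elementary algebraic facts about the reverse matrix: $\bJ^T = \bJ$, $\bJ^2 = \bI$, $\bJ\be = \be$ (reversing the all-ones vector leaves it fixed), and $\bJ\be_i = \be_{n+1-i}$. The key observation is that every one of the seven transforms can be obtained from $\bA$ by repeatedly applying just two operations: left multiplication by $\bJ$ (call it $L$) and transposition (call it $T$). For instance $\bA\bJ = (\bJ\bA^T)^T = TLT\bA$, $\bA^T\bJ = (\bJ\bA)^T = TL\bA$, and $\bJ\bA\bJ = LTLT\bA$, with the remaining transforms written similarly. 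Consequently it suffices to prove that $L$ and $T$ each send a magic square to a magic square; closure under composition then yields all seven transforms at once.

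For transposition I would check the four defining equations for $\bA^T$ by transposing scalars and using the cyclic invariance of the trace. Each defining quantity is a scalar, hence equal to its own transpose: the row sums of $\bA^T$ satisfy $\be_i^T\bA^T\be = (\be^T\bA\be_i)^T = \be^T\bA\be_i = \mu$ by the column property (\ref{def:magic:square:col}) of $\bA$, and dually the column sums of $\bA^T$ reduce to the row property (\ref{def:magic:square:row}), while the main diagonal gives $\tr(\bA^T) = \tr(\bA) = \mu$. For left multiplication by $\bJ$, the relations $\bJ^T = \bJ$, $\bJ\be = \be$, and $\bJ\be_i = \be_{n+1-i}$ convert the row sums of $\bJ\bA$ into a permutation of the row sums of $\bA$ and leave the column sums unchanged since $\be^T\bJ = \be^T$; the main diagonal of $\bJ\bA$ is $\tr(\bJ\bA) = \mu$, which is precisely the anti-diagonal condition (\ref{def:magic:square:altdiag}) for $\bA$.

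The step requiring the most care is the anti-diagonal condition (\ref{def:magic:square:altdiag}) under each operation, because unlike the rows and columns the two diagonals can swap roles. For $T$ I would use $\tr(\bJ\bA^T) = \tr((\bJ\bA^T)^T) = \tr(\bA\bJ) = \tr(\bJ\bA) = \mu$, combining $\bJ^T = \bJ$ with the cyclic property of the trace; for $L$ I would use $\tr(\bJ(\bJ\bA)) = \tr(\bJ^2\bA) = \tr(\bA) = \mu$, where $\bJ^2 = \bI$ is exactly what makes the anti-diagonal of $\bJ\bA$ revert to the main diagonal of $\bA$. Once $L$ and $T$ are each shown to preserve all four conditions, every composition of them does too, and since the seven transforms are exactly such compositions applied to $\bA$, they are all magic squares. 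The anti-diagonal bookkeeping is the only genuinely delicate part; the row and column conditions follow immediately from $\bJ\be = \be$ and the transpose-invariance of a scalar.
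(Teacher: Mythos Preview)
Your argument is correct. The elementary identities you record for $\bJ$ are exactly right, the reduction of every transform to a word in the two generators $L$ and $T$ is valid, and your verification that each of $L$ and $T$ separately preserves all four defining conditions (\ref{def:magic:square:row})--(\ref{def:magic:square:altdiag}) is complete, including the anti-diagonal bookkeeping you flag as delicate.

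Your route differs from the paper's in a useful way. The paper checks the four conditions directly for $\bJ\bA$ and then dismisses the other six transforms with ``the proofs of the remaining matrices listed in the theorem are similar to this''; in effect it treats the seven cases as parallel verifications left to the reader. You instead observe that the seven transforms are exactly the nontrivial elements of the dihedral group generated by $L$ and $T$, so proving closure under those two operations yields all seven at once. This buys you economy (two verifications instead of seven) and conceptual clarity (the result is really a statement that the magic-square conditions are invariant under the natural $D_4$ action on square matrices). The paper's direct approach, on the other hand, avoids the mild overhead of expressing each transform as a word in $L$ and $T$ and keeps the argument at the level of single matrix identities; it is the more hands-on demonstration of how the matrix definition of a magic square is meant to be used, which is the paper's stated pedagogical purpose for the theorem.
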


\begin{proof}
	First, we show that (\ref{def:magic:square:row})--(\ref{def:magic:square:altdiag}) are satisfied for $\bJ\bA$.  Applying (\ref{def:magic:square:row}) results in
	\begin{align*}
	\be_i^T (\bJ\bA) \be & = (\be_i^T \bJ) \bA \be = \be_{n+1-i}^T \bA \be = \mu, 
\end{align*}
for $i=1,2,\ldots,n$. In this case $\be_i^T\bJ=\be_{n+1-i}^T$ because the reversed version of $\be_i$ is $\be_{n+1-i}$. 

   Similarly,
   \begin{align*}
	\be^T (\bJ\bA) \be_i & = (\be^T \bJ) \bA \be_i = \be^T \bA \be_i = \mu, 
\end{align*}
for all $i=1,2,\ldots, n$ and $\be^T \bJ=\be^T$.  Property (\ref{def:magic:square:diag}) applied to $\bJ \bA$ is (\ref{def:magic:square:altdiag}) and vice versa.  This shows that $\bJ \bA$ is a magic square and the proofs of the remaining matrices listed in the theorem are similar to this.  

\end{proof}

\subsection{Eigenvalues and Eigenvectors of Magic Squares}

There are a few basics of eigenvalues and eigenvectors of magic squares.  Much of this work appears in Mattingly \cite{mattingly:2000} and is reproduced to demonstrate the proofs using the matrix definitions of the magic squares. Also, Loly et al. \cite{loly:2009} gives an extensive review of the literature of both applicable matrix results as well as properties of magic squares.  They emphasize eigenvalues and eigenvectors of general magic squares with a particular consideration on the characteristic polynomial.  They also show many examples of specific low-order magic squares.  

Again, to illustrate the usefulness of the definition of the magic square given above, we show the following. 

\begin{lemma} \label{lem:magic:eigenvalue}
Let $\bA$ be a magic square with magic number $\mu$. 
\begin{itemize}
\item The vector $\be$ is an eigenvector of $\bA$ with corresponding eigenvalue $\mu$.  
\item The vector $\be^T$ is a left-eigenvector of $\bA$ with corresponding eigenvalue $\mu$. 
\end{itemize}
\end{lemma}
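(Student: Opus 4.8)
The plan is to verify both eigenvector relations directly from the componentwise definition of a magic square, exactly as the excerpt invites. To establish the first bullet I would show that $\bA\be = \mu\be$, and since two vectors are equal precisely when all their components agree, I would compare the $i$th component of each side for every $i$. The crucial observation is that left-multiplication by $\be_i^T$ extracts the $i$th entry of a column vector, so the $i$th component of $\bA\be$ is exactly $\be_i^T \bA \be$. By the row-sum property (\ref{def:magic:square:row}) this equals $\mu$, which is also the $i$th component of $\mu\be$. Running over $i=1,2,\ldots,n$ then yields $\bA\be=\mu\be$, i.e. $\be$ is an eigenvector with eigenvalue $\mu$.

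The second bullet is entirely dual. Here I would show $\be^T\bA = \mu\be^T$ by comparing the $j$th components of these row vectors, now using right-multiplication by $\be_j$ to extract the $j$th entry. The $j$th component of $\be^T\bA$ is $\be^T\bA\be_j$, which equals $\mu$ by the column-sum property (\ref{def:magic:square:col}), matching the $j$th component of $\mu\be^T$. Since this holds for all $j=1,2,\ldots,n$, I conclude $\be^T\bA=\mu\be^T$, so that $\be^T$ is a left-eigenvector of $\bA$ with eigenvalue $\mu$.

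I do not anticipate any genuine obstacle: the result follows immediately once the row- and column-sum conditions are read as statements about individual components of $\bA\be$ and $\be^T\bA$. The only point requiring a moment's care is keeping the bookkeeping straight, namely remembering that the row condition (indexed by $i$) governs the right-eigenvector $\be$ while the column condition (indexed by $j$) governs the left-eigenvector $\be^T$. It is also worth noting that neither the diagonal conditions (\ref{def:magic:square:diag})--(\ref{def:magic:square:altdiag}) nor the restriction to integer entries plays any role, so the lemma in fact holds for every real magic square and indeed for every semi-magic square.
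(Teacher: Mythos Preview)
Your proposal is correct and follows essentially the same approach as the paper: both arguments verify $\bA\be=\mu\be$ and $\be^T\bA=\mu\be^T$ componentwise from the row- and column-sum conditions (\ref{def:magic:square:row}) and (\ref{def:magic:square:col}). The only cosmetic difference is that the paper packages the componentwise check by multiplying the scalar identity $\be_i^T\bA\be=\mu$ on the left by $\be$ (and dually $\be^T\bA\be_j=\mu$ on the right by $\be^T$), whereas you state the component extraction directly; the content is identical.
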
 

\begin{proof}  
Left-multiply both sides of  (\ref{def:magic:square:row}) by $\be$ to get $\be \be_i^T \bA \be = \be \mu$.  This simplifies to $\bA \be = \mu \be$, hence $\be$ is an eigenvector with associated eigenvalue $\mu$. 

Similarly right-multiply both sides of  (\ref{def:magic:square:col}) by $\be^T$ to get $\be^T \bA \be_j \be^T = \mu \be^T$.  This simplifies to $\be^T \bA = \mu \be^T$, hence $\be^T$ is a left-eigenvector with associated eigenvalue $\mu$. 
\end{proof}

Often, the magic number $\mu$ and vector $\be$ are called the magic eigenvalue and eigenvector.  Mattingly \cite{mattingly:2000} showed some results of eigenvalues and eigenvectors of regular magic squares and later we will generalize the results to other types of magic squares with certain symmetries.  

\subsection{Even-Ordered Regular Magic Squares are Singular}

Trigg \cite{trigg:1948} proved that all order-4, regular (or Dudeney type III) magic squares have zero determinant.\footnote{Although Trigg states that the sequence of integers used to create the magic square take on an arithmetic progression, his proof should extend to any non-natural magic square as defined here.}  Mattingly generalized this to any even-ordered magic square.  We define a regular magic square and reproduce some of Mattingly's notation which is helpful later in this article.    

\begin{definition}
A \textbf{regular magic square} is an order-$n$ magic square $\bA$ with magic number $\mu$ that satisfies 
\begin{align*}
 a_{i,j} + a_{n+1-i,n+1-j} = \frac{2\mu}{n}
\end{align*}
for every $i=1,2,\ldots,n$ and $j=1,2,\ldots,n$. 
\end{definition}

Mattingly \cite{mattingly:2000} noted that a regular magic square $\bA$  satisfies the property 
\begin{align}
	\bA + \bJ \bA \bJ = \frac{2\mu}{n} \bE, \label{eq:regular:perm}
\end{align}
where $\bE=\be \be^T$, the square matrix of all ones.  This is an equivalent definition of a regular magic square, because the matrix $\bJ \bA \bJ$ is the matrix $\bA$ reflected both vertically and horizontally and adding it to $\bA$ results in a constant matrix.  For example, if $\bA$ is the D\"urer magic square in (\ref{durer:magic:square}), then 
\begin{align*}
	\bA + \bJ \bA \bJ  = 
\begin{bmatrix}
16 &	3  &	2  &	13 \\ 
5 &	10 & 	11  &	8 \\ 
9 & 	6  &	7  &	12 \\ 
4  &	15 & 	14  &	1 \\ 
\end{bmatrix} + 
\begin{bmatrix}
1 & 14 & 15 & 4 \\
12 & 7 & 6 & 9 \\
	8 & 11 & 10 & 5 \\
	13 & 2 & 3 & 16 \\
\end{bmatrix} & =
\begin{bmatrix}
	17 & 17 & 17 & 17 \\	17 & 17 & 17 & 17 \\	17 & 17 & 17 & 17 \\	17 & 17 & 17 & 17 \\
\end{bmatrix} = \frac{2\mu}{n} \bE.
\end{align*}
In this case, $\mu=34$ and $n=4$.  

Defining a regular magic square as a matrix $\bA$ that satisfies (\ref{eq:regular:perm}) allows a natural generalization of type III magic squares of  Dudeney in Figure \ref{fig:order4-class} to those of higher order and as we will show below in a similar manner to types I and II as well.  

Mattingly shows two main results.  The first is that if $\lambda$ is an eigenvalue of a regular magic square, $\bA$ with associated eigenvector $\bu$ (and $\lambda \neq \mu$), then $-\lambda$ is also an eigenvalue of $\bA$ with associated eigenvector $\bJ \bu$.  Secondly, he shows that a regular even-ordered magic square is singular.  
We will examine each of these theorems in more detail later in this paper as we generalize these results.

\section{The Role of Permutation Matrices}
\label{sect:perm:matrix}

As we have seen in both the definition of a magic square and a regular magic square,  the reverse matrix $\bJ$ is an important permutation matrix in this context.  Much of this paper explains transformations of magic squares based on permutation matrices, where $\bJ$ is just an example of one.  

In short, a permutation matrix that right (left) multiplies a matrix permutes the rows (columns) of a matrix.  We saw above that if $\bA$ is magic, then $\bJ\bA, \bA\bJ$ and $\bJ\bA\bJ$ is also magic.  We are interested in what permutation matrices $\bP$ result in $\bP\bA, \bA\bP$ and $\bP\bA\bP$ being magic.  

First, consider a square matrix $\bA$ and a permutation matrix $\bP$ of the same order.  The matrix $\bP$ can be considered as a map $P$ where $P_i$ is the column where the 1 appears in the $i$th row.  The map can be considered a set of points of the locations of the 1s.   For example if
\begin{align*}
\bP & = \begin{bmatrix}
0 & 1 & 0 & 0 \\
0 & 0 & 1 & 0 \\
1 & 0 & 0 & 0 \\
0 & 0 & 0 & 1 
\end{bmatrix}
\end{align*}
Then the map is $P=\{ (1,2), (2,3), (3,1), (4,4) \}$ or more compactly can be written (as in \cite{golub:1989}) as $(2~3~1~4)$.    The matrix multiplication $\bA \bP$ is a permutation of the columns of $\bA$ under the map $P$.  The matrix multiplication $\bP\bA$ is a permutation of the rows of $\bA$ under that map $P^{-1}$, which in this example is $(3~1~2~4)$.  Generally the matrix and the map can be used interchangeably, but typically we will denote when each is used.

As we will see permutation matrices play an enormous role in magic squares, but first we will examine their role in semi-magic squares. 

\begin{theorem}
\label{thm:perm:semimagic}
If $\bP$ is a permutation matrix and $\bA$ is a semi-magic square then $\bP\bA, \bA \bP$ and $\bP\bA\bP$ are semi-magic.  
\end{theorem}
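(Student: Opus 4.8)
The plan is to exploit the single structural fact that makes a permutation matrix harmless to row and column sums: a permutation matrix $\bP$ satisfies $\bP\be = \be$ and $\be^T\bP = \be^T$. This holds because each row and each column of $\bP$ contains exactly one $1$, so multiplying by the all-ones vector merely records that every row (column) sum of $\bP$ equals $1$. I would state and justify these two identities first, since all three cases below reduce to them.

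Next I would recast the defining equations (\ref{def:magic:square:row}) and (\ref{def:magic:square:col}) of a semi-magic square into the compact eigenvector form $\bA\be = \mu\be$ and $\be^T\bA = \mu\be^T$. This is exactly the computation in the proof of Lemma \ref{lem:magic:eigenvalue}, which uses only the row and column equations and never the diagonal conditions, so it applies verbatim here. With this in hand, verifying that $\bP\bA$ is semi-magic is immediate: by associativity,
\begin{align*}
(\bP\bA)\be = \bP(\bA\be) = \mu\,\bP\be = \mu\be, \qquad \be^T(\bP\bA) = (\be^T\bP)\bA = \be^T\bA = \mu\be^T,
\end{align*}
so every row and every column of $\bP\bA$ sums to $\mu$. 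The square $\bA\bP$ is handled the same way, using $\bA\be = \mu\be$ on the left factor and $\be^T\bP = \be^T$ on the right, and $\bP\bA\bP$ follows by applying both identities in turn, e.g. $(\bP\bA\bP)\be = \bP\bA(\bP\be) = \bP(\bA\be) = \mu\,\bP\be = \mu\be$, with the left-sided statement entirely symmetric.

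There is no serious obstacle here; the content lies in recognizing that the semi-magic conditions are equivalent to $\be$ being a right- and left-eigenvector for $\mu$, after which the permutation matrices pass through cleanly. If one prefers to avoid invoking Lemma \ref{lem:magic:eigenvalue}, the alternative is a direct index argument: since $\be_i^T\bP = \be_{P_i}^T$ for the underlying permutation $P$, the $i$th row sum of $\bP\bA$ is $\be_i^T\bP\bA\be = \be_{P_i}^T\bA\be = \mu$ for every $i$, and the column sums and the remaining two products are analogous. The one point worth stating explicitly is that a semi-magic square imposes no constraint on its diagonals, so unlike the full magic case of Theorem \ref{thm:8magics} there is nothing to check beyond the two sum conditions \textemdash{} which is precisely why arbitrary permutations, and not only $\bJ$, are permitted.
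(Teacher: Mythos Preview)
Your proposal is correct. Your primary route packages the semi-magic conditions as the eigenvector identities $\bA\be=\mu\be$ and $\be^{T}\bA=\mu\be^{T}$ and then uses $\bP\be=\be$, $\be^{T}\bP=\be^{T}$; the paper instead verifies the individual conditions $\be_i^{T}(\bP\bA)\be=\mu$ and $\be^{T}(\bP\bA)\be_j=\mu$ directly, observing that $\be_i^{T}\bP$ is again a standard basis row vector and that $\be^{T}\bP=\be^{T}$. This index-by-index argument is precisely the alternative you sketch at the end, so the two proofs differ only in packaging: yours is marginally cleaner because it handles all rows (or columns) at once, while the paper's version stays closer to the defining equations (\ref{def:magic:square:row})--(\ref{def:magic:square:col}) and makes the role of the underlying permutation map explicit.
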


\begin{proof}
We first show that $\bP\bA$ is semi-magic by showing that (\ref{def:magic:square:row}) and (\ref{def:magic:square:col}) are satisfied. Let $P$ be the map corresponding to the permutation matrix as defined above.  Equation (\ref{def:magic:square:row}) applied to $\bP\bA$ is 
\begin{align*}
\be_i^T (\bP \bA) \be & = (\bP \be_i)^T \bA \be = \be_{k}^T \bA \be = \mu, 
\end{align*}
where $k=P^{-1}_i$.  This is satisfied for each $k$ in $\{1,2, \ldots, n\}$ since the map $P$ is onto.   Equation (\ref{def:magic:square:col}) applied to $\bP\bA$ is
\begin{align*}
\be^T (\bP \bA) \be_j & = (\be^T \bP) \bA \be_j = \be^T \bA \be_j = \mu.
\end{align*}

The proof that $\bA \bP$ is semi-magic is similar, but not shown here, and the matrix $\bP\bA\bP$ is semi-magic since both $\bP \bA$ and $\bA \bP$ are semi-magic.  

\end{proof}

Throughout this paper, we will use many examples of permutation matrices.  In the order-4 case, there are $4!=24$ such matrices and when enumerated, they are done in a standard manner with $\bP_1=(1~2~3~4), \bP_2=(1~2~4~3), \bP_3=(1~3~2~4), \ldots, \bP_{22}=(4~2~3~1),  \bP_{23}=(4~3~1~2), \bP_{24}=(4~3~2~1)$.  Also, note that $\bP_1=\bI$, the identity matrix and $\bP_{24}=\bJ$.

\subsection{Bisymmetric and 90\textdegree-Symmetric Permutation Matrices}

The permutation matrices that play the most significant role in transformation of magic squares are those that are bisymmetric.  For example, $\bI, \bJ$ as well as
\begin{align} \label{eq:bis:perm:order4}
\bP_{3}&=\begin{bmatrix}
1 & 0 & 0 & 0 \\
0 & 0 & 1 & 0 \\
0 & 1 & 0 & 0 \\
0 & 0 & 0 & 1 
\end{bmatrix} &
\bP_{8} & = \begin{bmatrix}
0 & 1 & 0 & 0 \\
1 & 0 & 0 & 0 \\
0 & 0 & 0 & 1 \\
0 & 0 & 1 & 0 
\end{bmatrix} &
\bP_{17} & = \begin{bmatrix}
0 & 0 & 1 & 0 \\
0 & 0 & 0 & 1 \\
1 & 0 & 0 & 0 \\
0 & 1 & 0 & 0 
\end{bmatrix} &
\bP_{22} &  = \begin{bmatrix}
0 & 0 & 0 & 1 \\
0 & 1 & 0 & 0 \\
0 & 0 & 1 & 0 \\
1 & 0 & 0 & 0 
\end{bmatrix} \end{align}
are the only 6 order-4 bisymmetric permutation matrices.  The subscripts are the standard enumeration of the permutation matrices.  Here we study these matrices including both the count of bisymmetric matrices as well as showing their role in magic squares.

\begin{lemma} \label{lemma:number:bis:perms}
Let $B(n)$ be the number of bisymmetric permutation matrices of order $n$.  Then $B(1)=1, B(2)=2,  B(4)=6$, $B(n)=B(n-1)$ if $n\geq 3$ is odd and $B(n)=2B(n-2)+(n-2)B(n-4)$ if $n\geq 6$ is even.  

\end{lemma}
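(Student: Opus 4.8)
The plan is to translate the matrix condition into a purely combinatorial statement about permutations and then count by a recursion on the orbit of a single index. Write $\bP=\bP_\sigma$ for the permutation matrix of $\sigma\in S_n$, so that $(\bP_\sigma)_{i,j}=1$ exactly when $\sigma(i)=j$, and recall that $\bP_\sigma\bP_\tau=\bP_{\tau\circ\sigma}$ and $\bP_\sigma^T=\bP_{\sigma^{-1}}$. The matrix $\bJ$ is $\bP_\rho$ for the reversal $\rho(i)=n+1-i$, itself an involution. First I would show that $\bP_\sigma$ is bisymmetric if and only if $\sigma$ is an involution commuting with $\rho$: symmetry $\bP=\bP^T$ says $\sigma=\sigma^{-1}$, and given that, persymmetry $\bJ\bP=(\bJ\bP)^T$ reduces to $\bJ\bP=\bP\bJ$, i.e. $\sigma\circ\rho=\rho\circ\sigma$; the converse is checked the same way. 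Thus $B(n)$ counts the involutions of $S_n$ commuting with $\rho$, and the base values $B(1)=1$, $B(2)=2$, $B(4)=6$ are verified directly (it is also convenient to record $B(0)=1$).

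For odd $n\ge 3$ the reversal $\rho$ fixes only the center $c=(n+1)/2$. If $\sigma$ commutes with $\rho$ then $\rho(\sigma(c))=\sigma(\rho(c))=\sigma(c)$, so $\sigma(c)$ is a fixed point of $\rho$ and hence $\sigma(c)=c$. Deleting $c$ leaves an involution commuting with the reversal on the remaining $n-1$ indices, and conversely every such involution extends uniquely by fixing $c$; this bijection gives $B(n)=B(n-1)$.

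For even $n\ge 6$ the reversal has no fixed point and pairs the indices as $\{i,n+1-i\}$. I would condition on the value $a=\sigma(1)$, noting that commutation forces $\sigma(n)=\rho(\sigma(1))=n+1-a$. Three cases arise: if $a=1$ then $\sigma$ fixes both $1$ and $n$; if $a=n$ then $\sigma$ swaps the pair $\{1,n\}$; and if $a\notin\{1,n\}$ then, writing $a'=n+1-a$, the indices $1,n,a,a'$ are four distinct indices forming two $\rho$-pairs on which $\sigma$ acts as $1\leftrightarrow a$, $n\leftrightarrow a'$. In every case the unused indices form a union of $\rho$-pairs of total size $n-2$ (first two cases) or $n-4$ (third case), and the restriction of $\sigma$ to them is again an involution commuting with the reversal; by the relabeling invariance of this count these contribute $B(n-2)$, respectively $B(n-4)$, completions. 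The first two cases each give $B(n-2)$, and in the third case $a$ may be any of the $n-2$ indices other than $1$ and $n$, each determining a distinct four-index block, giving $(n-2)B(n-4)$. Summing yields $B(n)=2B(n-2)+(n-2)B(n-4)$.

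The step I expect to require the most care is the third case of the even recursion: one must check that $a$ and $a'=n+1-a$ are genuinely distinct from each other and from $1,n$ (this uses that $n$ is even, so $2a=n+1$ is impossible), that distinct choices of $a$ yield distinct permutations, and—most importantly—that the multiplicity is $n-2$ rather than $(n-2)/2$, since choosing $a$ and choosing its partner $a'$ produce different involutions. Confirming $B(4)=2B(2)+2B(0)=6$ with $B(0)=1$ is a useful consistency check that the coefficient in the recursion is correct.
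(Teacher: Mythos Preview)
Your proof is correct and follows essentially the same approach as the paper: both argue by conditioning on the position of the $1$ in the first row (equivalently, on $\sigma(1)$), splitting into the cases $\sigma(1)=1$, $\sigma(1)=n$, and $\sigma(1)\in\{2,\dots,n-1\}$, and for odd $n$ both observe that the center must be fixed. The only cosmetic difference is that you first translate bisymmetry into the equivalent combinatorial statement ``$\sigma$ is an involution commuting with the reversal $\rho$,'' whereas the paper works directly with matrix entries; your version is arguably cleaner and your explicit check that the third-case multiplicity is $n-2$ rather than $(n-2)/2$ is a point the paper leaves implicit.
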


\begin{proof}
The 1 by 1 matrix $[1]$ is the only bisymmetric permutation matrix of this size and $\bI$ and $\bJ$ are the only order-2 bisymmetric permutation matrices.  In the order-4 case, $\bI, \bJ$ and the four matrices in (\ref{eq:bis:perm:order4}) are the only 6 bisymmetric permutation matrices. 

If $n\geq3$ is odd, then in order to be bisymmetric, there must be a 1 in the center row, center column.  Let $\bP$ be such a matrix.  Then the submatrix $\bP([1,2, \ldots, (n-1)/2,(n+3)/2,\ldots, n)],[1,2, \ldots, (n-1)/2,(n+3)/2,\ldots, n)])$ must also be bisymmetric, and this submatrix is order $n-1$.  Thus, the number of such matrices of odd order must be equal to the number of bisymmetric permutation matrices which is one smaller order or  $B(n-1)$.  

Lastly, if $n \geq 6$ is even, then we construct all matrices of this order.  If the 1 in the first row is in the first column (and thus by symmetry) also in the last row and last column, then the submatrix, $\bP([2,\ldots,n-1],[2,\ldots,n-1])$ must be bisymmetric and the number of these are $B(n-2)$.  An additional $B(n-2)$ bisymmetric permutation matrices are found with a 1 in the first column last row (and again by symmetry) the last row and first column.  

The number of such matrices with the 1 on the first row and in column $j$ with $2\leq j\leq n-1$ is found by noting that this row is $\be_j^T$ and by symmetry the first column is $\be_j$, the last column is $\be_{n+1-j}$ and the last row if $\be_{n+1-j}^T$.  The remaining submatrix to be filled is $(n-4)$ rows by $(n-4)$ columns and must be bisymmetric.  Therefore the number of such matrices is $(n-2)B(n-4)$ and adding to the number of matrices with a 1 in the upper left or upper right corner, the total number of even bisymmetric permutation matrices is $B(n)=2B(n-2)+(n-2)B(n-4)$.  
\end{proof}

We will see later how the knowledge of the number of bisymmetric permutation matrices of a given order helps understand the count of the number of magic squares of a given order.  We now prove a major result between general magic squares and permutation matrices. 

\begin{theorem} \label{thm:bisymm:perm}
	If $\bA$ is a magic square and $\bP$ is a bisymmetric permutation matrix, then $\bP \bA \bP$ is a magic square. 
\end{theorem}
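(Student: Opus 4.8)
The plan is to separate the four defining conditions into the two that come for free and the two that require work. By Theorem \ref{thm:perm:semimagic}, $\bP\bA\bP$ is already semi-magic, so the row condition (\ref{def:magic:square:row}) and the column condition (\ref{def:magic:square:col}) hold automatically. All that remains is to verify the two diagonal conditions, namely $\tr(\bP\bA\bP)=\mu$ for (\ref{def:magic:square:diag}) and $\tr(\bJ\bP\bA\bP)=\mu$ for (\ref{def:magic:square:altdiag}).

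The heart of the argument is to distill the hypothesis ``$\bP$ is bisymmetric'' into two clean algebraic identities. Since $\bP$ is symmetric we have $\bP^T=\bP$, and since $\bP$ is a permutation matrix we have $\bP^T\bP=\bI$; combining these yields $\bP^2=\bI$, so $\bP$ is an involution. From persymmetry, $\bJ\bP=(\bJ\bP)^T=\bP^T\bJ=\bP\bJ$, so $\bP$ commutes with $\bJ$. I expect this step---recognizing that bisymmetry is exactly equivalent to these two facts---to be the main (though modest) obstacle; once the identities are in place the rest is mechanical.

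With $\bP^2=\bI$ and $\bJ\bP=\bP\bJ$ established, both diagonal conditions follow from the cyclic invariance of the trace. For the main diagonal, $\tr(\bP\bA\bP)=\tr(\bA\bP\bP)=\tr(\bA\bP^2)=\tr(\bA)=\mu$, giving (\ref{def:magic:square:diag}). For the minor diagonal, I would first push $\bJ$ past $\bP$ and then cycle: $\tr(\bJ\bP\bA\bP)=\tr(\bP\bJ\bA\bP)=\tr(\bP\bP\bJ\bA)=\tr(\bP^2\bJ\bA)=\tr(\bJ\bA)=\mu$, giving (\ref{def:magic:square:altdiag}). Together with the semi-magic property, these establish all of (\ref{def:magic:square:row})--(\ref{def:magic:square:altdiag}) for $\bP\bA\bP$, completing the proof. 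It is worth noting that symmetry alone suffices for the main diagonal, while persymmetry (commutativity with $\bJ$) is precisely what is needed for the minor diagonal, so both halves of the bisymmetry hypothesis are genuinely used.
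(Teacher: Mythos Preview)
Your proof is correct and follows essentially the same approach as the paper: invoke Theorem~\ref{thm:perm:semimagic} for the semi-magic property, extract $\bP^2=\bI$ and $\bJ\bP=\bP\bJ$ from bisymmetry, and then use cyclic invariance of the trace to verify the two diagonal conditions. Your derivation is slightly more explicit about why $\bP^2=\bI$, and your closing remark pinpointing which half of bisymmetry handles which diagonal is a nice addition not present in the paper.
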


%
%
%

\begin{proof}
Since $\bP$ is bisymmetric, then $\bP=\bP^T$ and $\bJ\bP = (\bJ\bP)^T$ or alternatively, $\bJ \bP = \bP \bJ$, since $\bJ$ is symmetric as well.  Also, from Theorem \ref{thm:perm:semimagic}, $\bP\bA\bP$ is semi-magic.  Thus, we only need to prove  the diagonal sum properties.   Applying (\ref{def:magic:square:diag}) and (\ref{def:magic:square:altdiag}) to $\bP\bA\bP$ results in 
\begin{align*}
\tr(\bP\bA\bP) & =\tr(\bP \bP \bA) = \tr(\bA)=\mu \\
\tr(\bJ \bP\bA\bP) & = \tr(\bP \bJ \bA\bP) = \tr(\bP \bP \bJ \bA) = \tr(\bJ \bA) =\mu
\end{align*}
where $\bJ\bP=\bP\bJ$ is used, for any permutation matrix $\bP^{-1}=\bP^{T}$ and recalling that traces are invariant under cyclic permutations.  (See \cite{lang:1987}). 

\end{proof}

As an example let $\bA$ be the D\"urer Magic square in (\ref{durer:magic:square}) and using the bisymmetric permutation matrix $\bP_3$ given in (\ref{eq:bis:perm:order4}), the result is
\begin{align*}
\bP_3 \bA\bP_3 & = \begin{bmatrix}
16 & 2 & 3 & 13 \\
9 & 7 & 6 & 12 \\
5 & 11 & 10 & 8 \\
4 & 14 & 15 &1 
\end{bmatrix}
\end{align*}
another magic square.  Pre- and post-multiplying $\bA$ by any of the other 5 bisymmetric permutation matrices results in other magic squares (or the same in the case of $\bP=\bI$).  

\begin{lemma} \label{lemma:JP:bisymm}
If $\bP$ is bisymmetric, then $\bJ\bP$ is bisymmetric.
\end{lemma}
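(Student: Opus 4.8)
### Proof Proposal

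The plan is to show that $\bJ\bP$ is both symmetric and persymmetric, which by definition makes it bisymmetric. I will work directly from the two conditions defining a bisymmetric matrix applied to $\bP$, namely $\bP = \bP^T$ (symmetry) and $\bJ\bP = (\bJ\bP)^T$ (persymmetry), together with the standing fact that $\bJ$ is symmetric and an involution, i.e. $\bJ = \bJ^T$ and $\bJ^2 = \bI$. The key structural observation, already used in the proof of Theorem \ref{thm:bisymm:perm}, is that a bisymmetric $\bP$ commutes with $\bJ$: from $\bJ\bP = (\bJ\bP)^T = \bP^T\bJ^T = \bP\bJ$ we get $\bJ\bP = \bP\bJ$. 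This commutation relation will do most of the work.

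First I would verify persymmetry of $\bJ\bP$, which is the easier of the two because it is almost immediate. Persymmetry of $\bJ\bP$ means $\bJ(\bJ\bP) = (\bJ(\bJ\bP))^T$. The left side is $\bJ^2\bP = \bP$, and since $\bP$ is symmetric this equals $\bP^T = (\bJ^2\bP)^T = (\bJ(\bJ\bP))^T$, so the persymmetry condition reduces exactly to the symmetry of $\bP$, which is given.

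Next I would verify symmetry of $\bJ\bP$, that is, $(\bJ\bP)^T = \bJ\bP$. Computing the transpose gives $(\bJ\bP)^T = \bP^T\bJ^T = \bP\bJ$, again using symmetry of both $\bP$ and $\bJ$. Then the commutation relation $\bP\bJ = \bJ\bP$ finishes it, so $(\bJ\bP)^T = \bJ\bP$ as desired. I expect no serious obstacle here; the only subtlety is to keep the order of operations straight and to remember to invoke the commutation identity rather than try to manipulate $\bJ$ and $\bP$ as if they were scalars. Since $\bJ\bP$ is both symmetric and persymmetric, it is bisymmetric, completing the proof. One could organize the two verifications in either order, and it would be worth noting at the outset that $\bP$ itself need not be a permutation matrix for this argument — only that it is bisymmetric — so the lemma as stated is a special instance of the purely algebraic fact that left-multiplication by the symmetric involution $\bJ$ preserves bisymmetry.
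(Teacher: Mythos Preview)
Your proof is correct and follows essentially the same approach as the paper. The paper's argument is terser: it simply observes that the two conditions required for $\bJ\bP$ to be bisymmetric, namely $(\bJ\bP)=(\bJ\bP)^T$ and $\bJ(\bJ\bP)=(\bJ(\bJ\bP))^T$ (i.e.\ $\bP=\bP^T$), are precisely the persymmetry and symmetry of $\bP$, respectively, so they hold by hypothesis. Your detour through the commutation relation $\bJ\bP=\bP\bJ$ to establish symmetry of $\bJ\bP$ is unnecessary, since $(\bJ\bP)=(\bJ\bP)^T$ \emph{is} the persymmetry of $\bP$ verbatim, but the logic is the same and the argument is sound.
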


\begin{proof}
We need to show that both $(\bJ\bP)=(\bJ\bP)^T$ and $(\bJ\bJ\bP) =(\bJ\bJ\bP)^T$ or $\bP = \bP^T$.  These two statements comprise the definition of bisymmetric. 
\end{proof}


Above, we defined a 90\textdegree-symmetric matrix and gave an example of a 90\textdegree-symmetric permutation matrix in (\ref{eq:rot90:perm:ex}) and here we will show their role in transformations of magic squares and similar to that of bisymmetric permutation matrices, we will find the number of such matrices of a given order.

\begin{lemma} \label{lemma:number:90sym}
Let $R(n)$ be the number of order-$n$ permutation matrices which are 90\textdegree-symmetric. Then 
$R(1)  = 1,  R(2) = 0, R(4) = 2, $
if $n\geq 3$ is odd then $R(n) = R(n-1)$ and if $n$ is even and $n \geq 6$ then $R(n) = (n-2) R(n-4)$.  
\end{lemma}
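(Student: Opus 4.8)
The plan is to translate the matrix identity $\bP^T = \bJ\bP$ into a statement about the positions of the $1$s and then mimic the recursive counting used in Lemma \ref{lemma:number:bis:perms}. Comparing entries, $(\bP^T)_{i,j} = \bP_{j,i}$ while $(\bJ\bP)_{i,j} = \bP_{n+1-i,\,j}$, so the condition is equivalent to $\bP_{r,c} = \bP_{n+1-c,\,r}$ for all $r,c$. Hence a $1$ sitting in cell $(r,c)$ forces a $1$ in cell $(n+1-c,\,r)$; writing $T(r,c) = (n+1-c,\,r)$, the set of $1$s of a 90\textdegree-symmetric permutation matrix is exactly a union of $T$-orbits. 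A direct check shows $T^4$ is the identity, that the only fixed cell of $T$ is the center $((n+1)/2,(n+1)/2)$ (which exists only for odd $n$), and that $T$ admits no genuine $2$-cycle, since $T^2(r,c)=(n+1-r,\,n+1-c)$ returns to $(r,c)$ only at the center. Consequently every orbit has size $4$ except possibly the central fixed cell. The base cases $R(1)=1$, $R(2)=0$ and $R(4)=2$ I would verify by directly testing the relation $\bP^T=\bJ\bP$ against the order-$1$, order-$2$, and order-$4$ permutation matrices.

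For odd $n \geq 3$ I would first argue that the center cell must contain a $1$. Indeed, the unique $1$ in the central row lies in some cell $((n+1)/2,\,c)$; if $c \neq (n+1)/2$ its $T$-orbit also contains $((n+1)/2,\,n+1-c)$, a second $1$ in the central row, which is impossible. So $c=(n+1)/2$ and the center is occupied. Deleting the central row and column and relabeling order-preservingly leaves an $(n-1)\times(n-1)$ array on which the surviving cells are permuted by the analogous rotation for size $n-1$, so the deleted matrix is 90\textdegree-symmetric of order $n-1$; this correspondence is a bijection, giving $R(n)=R(n-1)$.

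For even $n \geq 6$ I would track the $1$ in the first row, say in cell $(1,j)$. Its $T$-orbit is $\{(1,j),(n+1-j,1),(n,n+1-j),(j,n)\}$, which occupies rows and columns exactly $\{1,\,j,\,n+1-j,\,n\}$. For these four cells to lie in four distinct rows (no row may carry two $1$s) one needs $j \neq 1$ and $j\neq n$, and since $n$ is even $j \neq n+1-j$ automatically; thus $j$ ranges over $\{2,3,\ldots,n-1\}$, giving $n-2$ choices. The key structural point is that $\{1,j,n+1-j,n\}$ is invariant under the reversal $i \mapsto n+1-i$, so its complement is too; deleting these four rows and four columns and relabeling order-preservingly to $\{1,\ldots,n-4\}$ turns $T$ into the analogous rotation for size $n-4$. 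Each choice of $j$ therefore extends bijectively by an arbitrary 90\textdegree-symmetric matrix of order $n-4$, and summing over $j$ yields $R(n)=(n-2)R(n-4)$.

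The main obstacle is the bijection bookkeeping rather than any hard computation: I must confirm that deleting the distinguished orbit (the center for odd $n$, the row-$1$ orbit for even $n$) always leaves a reversal-closed set of rows and columns, so that the induced map on the surviving cells is again a 90\textdegree-rotation and the surviving array is genuinely a smaller 90\textdegree-symmetric permutation matrix, and conversely that every such smaller matrix arises from a unique completion. Checking that each size-$4$ orbit really meets four distinct rows and four distinct columns, so that the count is not corrupted by degenerate orbits, is the one place where the parity of $n$ must be used carefully.
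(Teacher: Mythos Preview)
Your proposal is correct and follows essentially the same approach as the paper: for odd $n$ you force a $1$ at the center and delete the central row and column, and for even $n$ you track the $1$ in the first row (ruling out the corner columns), observe that symmetry fills in four rows and four columns, and recurse on the remaining $(n-4)\times(n-4)$ submatrix. Your orbit map $T$ and the explicit check that the deleted index set is reversal-closed make the bijection bookkeeping more precise than the paper's informal construction, but the underlying decomposition and count are identical.
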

\begin{proof}
The only order-1 permutation matrix, $[1]$ is also 90\textdegree-symmetric.  There are two order-2 permutation matrices and neither are 90\textdegree-symmetric.  The order-4 case can be shown by construction, let $\bP$ be such a matrix.  The first row cannot have a 1 in the first or last column, because this would imply that there is a 1 in each corner and such a permutation matrix does not exist.  Thus, the first row can have a 1 in the 2nd or 3rd column, and placing the 1 fills in the rest of the matrix.   These two matrices are the matrix in (\ref{eq:rot90:perm:ex}) and $\bJ$ times that matrix.  

Next, consider an odd order 90\textdegree-symmetric permutation matrix, $
\bP$ with $n \geq 3$.  The 1 in the middle row of such a matrix must be in the middle column in order to be 90\textdegree-symmetric.  The submatrix $\bP[(1..(n-1)/2, (n+3)/2..n),(1..(n-1)/2, (n+3)/2..n),]$ must also be a 90\textdegree-symmetric matrix, so $R(n) = R(n-1).$  

Lastly, let $\bP$ be an order-$n$  90\textdegree-symmetric permutation matrix with $n$ even and $n \geq 6$.  This proof continues by construction.  As described above, the 1 in the first row cannot be in the first or last column, so a 1 can be placed in column 2 through $n-1$ (for a total of $n-2$ locations).  Due to the the symmetry required, a one must also be located on the last column, last row and first column and zeros throughout the matrix in all rows and columns where the 1s are located.  This process fills in 4 rows and 4 columns symmetrically and the remaining submatrix (of size $(n-4)$ by $(n-4)$) must be filled in with a  90\textdegree-symmetric permutation matrix.  Thus the number of  90\textdegree-symmetric permutation matrices for an even order is $R(n) = (n-2) R(n-4)$.  
\end{proof} 

This lemma shows that the 90\textdegree-symmetric permutation matrices that exist are doubly-even order (size $4k$) and odd order just larger (size $4k+1$).  We also show that this proof leads to a construction of  90\textdegree-symmetric permutation matrices of a given size.  As an example, let $n=8$.  We can select a 1 in first row (columns 2 through 7), we'll pick the 3rd column.  Since the matrix need to be  90\textdegree-symmetric, this results in 1s in locations to keep the matrix 90\textdegree-symmetric or
\begin{align*}
\begin{bmatrix}
0 & 0 & 1 & 0 & 0 & 0 & 0 & 0 \\
0 &    & 0 &   &    &  0 &    & 0 \\
0 & 0 & 0 & 0 & 0 & 0 & 0 & 1 \\
0 &    & 0 &    &   &  0 &    & 0 \\
0 &     & 0 &   &    & 0 &    & 0 \\
1 & 0 & 0 & 0 & 0 & 0 & 0 & 0 \\
0 &    & 0 &    &    & 0 &    & 0 \\
0 & 0 & 0 & 0 & 0 & 1 & 0 & 0   
\end{bmatrix}
\end{align*}

To fill in the remaining submatrix we can choose either the matrix in (\ref{eq:rot90:perm:ex}) or $\bJ$ times that matrix.  Let's fill in the blank spots above with (\ref{eq:rot90:perm:ex}) to arrive at the 90\textdegree-symmetric permutation matrix:
\begin{align*}
\begin{bmatrix}
0 & 0 & 1 & 0 & 0 & 0 & 0 & 0 \\
0 & 0 & 0 & 1 & 0 & 0 & 0  & 0 \\
0 & 0 & 0 & 0 & 0 & 0 & 0  & 1 \\
0 & 0 & 0 & 0 & 0 & 0 & 1  & 0 \\
0 & 1 & 0 & 0 & 0 & 0 & 0  & 0 \\
1 & 0 & 0 & 0 & 0 & 0 & 0 & 0 \\
0 & 0 & 0 & 0 & 1 & 0 & 0 & 0 \\
0 & 0 & 0 & 0 & 0 & 1 & 0 & 0   
\end{bmatrix}
\end{align*}

The importance of these permutation matrices in the study of magic squares is due to the following theorem. 

\begin{theorem} \label{thm:90sym}
If $\bP$ is a 90\textdegree-symmetric permutation matrix and $\bA$ an order-$n$ magic square, then $\bP\bA\bP$ is a magic square.  
\end{theorem}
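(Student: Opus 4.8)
The plan is to follow the template of the proof of Theorem \ref{thm:bisymm:perm}: first discharge the row and column conditions for free via the semi-magic result, then verify only the two diagonal conditions by extracting the correct algebraic identities for a 90\textdegree-symmetric $\bP$. Concretely, I would begin by applying Theorem \ref{thm:perm:semimagic}. Since $\bP$ is a permutation matrix, $\bP\bA\bP$ is automatically semi-magic, so (\ref{def:magic:square:row}) and (\ref{def:magic:square:col}) already hold. It therefore remains only to establish the main-diagonal condition $\tr(\bP\bA\bP)=\mu$ and the minor-diagonal condition $\tr(\bJ\bP\bA\bP)=\mu$.

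The heart of the argument is a pair of identities. From the definition of 90\textdegree-symmetric we have $\bP^T=\bJ\bP$; transposing and using $\bJ^T=\bJ$ gives $\bP=\bP^T\bJ$. Because $\bP$ is a permutation matrix, $\bP^T=\bP^{-1}$, and substituting this into $\bP=\bP^T\bJ$ yields $\bP=\bP^{-1}\bJ$, hence $\bP^2=\bJ$. Thus a 90\textdegree-symmetric permutation matrix squares to the reverse matrix. The two facts I would carry forward are $\bP^2=\bJ$ together with $\bJ\bP=\bP^T$ and $\bP\bP^T=\bI$.

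With these in hand, both trace computations are immediate from cyclic invariance of the trace. For the main diagonal,
\begin{align*}
\tr(\bP\bA\bP)=\tr(\bP^2\bA)=\tr(\bJ\bA)=\mu,
\end{align*}
where the last equality is property (\ref{def:magic:square:altdiag}) for $\bA$. For the minor diagonal,
\begin{align*}
\tr(\bJ\bP\bA\bP)=\tr(\bP^T\bA\bP)=\tr(\bP\bP^T\bA)=\tr(\bA)=\mu,
\end{align*}
using property (\ref{def:magic:square:diag}) for $\bA$. Notice that the two diagonal sums of $\bA$ swap roles, which is exactly what one expects geometrically, since $\bP$ encodes a 90\textdegree\ rotation that interchanges the main and minor diagonals.

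I would flag the identity $\bP^2=\bJ$ as the one genuinely new ingredient, and hence the step most likely to be overlooked: the bisymmetric case of Theorem \ref{thm:bisymm:perm} instead relied on $\bP=\bP^T$ together with $\bJ\bP=\bP\bJ$, whereas here the relevant relation is that squaring reverses. Once $\bP^2=\bJ$ is recognized, no real obstacle remains, as everything else reduces to routine cyclic-trace bookkeeping.
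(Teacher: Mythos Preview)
Your proof is correct and follows essentially the same approach as the paper's: invoke Theorem \ref{thm:perm:semimagic} for the row/column sums, then verify the two diagonal conditions via cyclic invariance of the trace together with the identities $\bJ\bP=\bP^T$ and $\bP=\bP^T\bJ$. The only cosmetic difference is that you explicitly isolate and name the consequence $\bP^2=\bJ$ before the trace computation, whereas the paper performs the equivalent substitution $\bP\bP=\bP(\bP^T\bJ)=\bJ$ inline; your minor-diagonal computation is identical to the paper's.
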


\begin{proof}
If $\bP$ is 90\textdegree-symmetric, then it satisfies $\bP^T = \bJ \bP$ or alternatively, $\bP = \bP^T \bJ$.  From Theorem \ref{thm:perm:semimagic}, $\bP\bA\bP$ is semi-magic, so we need to only show the diagonal properties of the magic squares.  Applying (\ref{def:magic:square:diag}) to $\bP\bA\bP$ we get,
\begin{align*}
\tr(\bP\bA\bP) = \tr(\bP\bP\bA) = \tr(\bP \bP^T \bJ \bA) = \tr(\bJ\bA) = \mu
\end{align*}
since traces are invariant under cyclic permutations and for any permutation matrix $\bP^T=\bP^{-1}$.  The last step arises from (\ref{def:magic:square:altdiag}).   In a similar manner, applying (\ref{def:magic:square:altdiag}) to $\bP\bA\bP$ is
\begin{align*}
\tr(\bJ\bP\bA\bP) = \tr(\bP^T \bA \bP) = \tr(\bP \bP^T \bA) = \tr(\bA) =\mu, 
\end{align*}
where the last step is found in (\ref{def:magic:square:diag}).  
\end{proof}

\begin{lemma} \label{lemma:JP:90rot}
If $\bP$ is 90\textdegree-symmetric, then $\bJ\bP$ is 90\textdegree-symmetric. 
\end{lemma}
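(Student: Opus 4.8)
The plan is to verify the defining identity of 90\textdegree-symmetry for the matrix $\bJ\bP$ directly, that is, to show $(\bJ\bP)^T = \bJ(\bJ\bP)$, working purely algebraically from the hypothesis $\bP^T = \bJ\bP$ together with the two elementary properties of the reverse matrix that are used throughout this section: $\bJ$ is symmetric ($\bJ^T = \bJ$) and $\bJ$ is an involution ($\bJ^2 = \bI$, since reversing twice restores the original). Note that nowhere will I need $\bP$ to be a permutation matrix; the argument holds for any matrix satisfying $\bP^T=\bJ\bP$, which parallels the terse treatment of the bisymmetric case in Lemma \ref{lemma:JP:bisymm}.

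First I would record an auxiliary identity obtained by transposing the hypothesis. Transposing $\bP^T = \bJ\bP$ gives $\bP = (\bJ\bP)^T = \bP^T \bJ^T = \bP^T \bJ$, and substituting $\bP^T = \bJ\bP$ into the right-hand side yields $\bP = \bJ\bP\bJ$. Equivalently, left-multiplying by $\bJ$ shows that $\bP$ commutes with $\bJ$; it is worth emphasizing that this commutativity is \emph{forced} by 90\textdegree-symmetry and need not be assumed as an extra hypothesis.

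With this in hand the verification is immediate. For the left-hand side I would compute $(\bJ\bP)^T = \bP^T \bJ^T = \bP^T \bJ = \bJ\bP\bJ = \bP$, invoking in turn the symmetry of $\bJ$, the hypothesis $\bP^T=\bJ\bP$, and the auxiliary identity. For the right-hand side, $\bJ(\bJ\bP) = \bJ^2 \bP = \bP$ by the involution property. Since both sides equal $\bP$, we conclude $(\bJ\bP)^T = \bJ(\bJ\bP)$, which is precisely the statement that $\bJ\bP$ is 90\textdegree-symmetric.

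There is essentially no serious obstacle here; the content is a two-line manipulation. The only point requiring care is being explicit about which properties of $\bJ$ are invoked---its symmetry and the involution $\bJ^2 = \bI$---and recognizing that the apparent need for $\bP^T\bJ = \bP$ reduces, via the auxiliary identity, to facts already contained in the hypothesis. One could alternatively compress everything into the single chain $(\bJ\bP)^T = \bP^T\bJ = \bJ\bP\bJ = \bP = \bJ^2\bP = \bJ(\bJ\bP)$, but separating out the auxiliary identity $\bJ\bP\bJ=\bP$ makes the role of each property transparent.
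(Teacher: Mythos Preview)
Your proof is correct and follows essentially the same route as the paper's: both reduce the desired identity $(\bJ\bP)^T=\bJ(\bJ\bP)$ to $\bP$ on each side using $\bJ^T=\bJ$, $\bJ^2=\bI$, and the hypothesis $\bP^T=\bJ\bP$. The paper is simply terser, observing that $(\bJ\bP)^T=\bP$ is, upon transposing, exactly the hypothesis, whereas you spell out the intermediate identity $\bJ\bP\bJ=\bP$ explicitly.
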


\begin{proof}
Applying the definition to $\bJ\bP$ leads to $(\bJ\bP)^T =\bJ (\bJ \bP)$ or $(\bJ\bP)^T = \bP$.  Taking the transpose leads to $\bJ \bP = \bP^T$, the definition of 90\textdegree-symmetric. 
\end{proof}

%

\subsection{Extending the Family of Magic Squares} \label{sect:magic:family}

The results of Theorems \ref{thm:bisymm:perm} and  \ref{thm:90sym} indicate that we can extend the number of related magic squares beyond that given by Theorem \ref{thm:8magics}.  From Theorem \ref{thm:8magics} and Lemmas  \ref{lemma:number:bis:perms} and \ref{lemma:number:90sym} there are  $8(B(n)+R(n))$ related magic matrices by applying $\bP_i\bA\bP_i$ to the magic square $\bA$ for $\bP_i$ a bisymmetric or 90\textdegree-symmetric permutation matrix.  However, as we will see  double counting occurs and the number of unique matrices of the form $\bP_i\bA\bP_i$ is only $4(B(n)+R(n))$.  

To examine this more closely, let $\bA$ be an order-4 magic square, and $\bP_i$ is the $i$th standard permutation matrix.  Theorem \ref{thm:bisymm:perm} indicates that $\bI\bA\bI$, $\bP_3\bA\bP_3$ $\bP_8\bA\bP_8$, $\bP_{17}\bA\bP_{17}$, $\bP_{22}\bA\bP_{22}$, and $\bJ\bA\bJ$ are also magic squares.  In addition, $\bP_{11}\bA\bP_{11}$ and $\bP_{14}\bA\bP_{14}$ are also magic from Theorem \ref{thm:90sym}.  If the reflections and rotations in Theorem \ref{thm:8magics} are applied to these eight matrices, there are 64 matrices, howover only 32 are unique.  For example, $\bJ\bA\bJ$ is listed in both Theorem \ref{thm:bisymm:perm} (since $\bJ$ is bisymmetric) and Theorem \ref{thm:8magics}.  Another example shows that if one pre- and post-multiplies  $\bP_{11}\bA\bP_{11}$ by $\bJ$ (from Theorem \ref{thm:8magics}) the result is $\bP_{14}\bA\bP_{14}$, since $\bJ\bP_{11}=\bP_{14}$.    

The double counting occurs because the permutation matrices listed above always come in pairs.  If $\bP$ is either bisymmetric or 90\textdegree-symmetric, then $\bJ\bP$ is as well, as was shown in Lemmas \ref{lemma:JP:bisymm}  and \ref{lemma:JP:90rot}.

We will call the set of matrices starting with magic square $\bA$ and formed by both reflections and rotations as well as transforming using the appropriate permutation matrix the \textbf{family of transformations of $\bA$}.  For example, if we define $\bA$ to be the D\"urer Magic Square in (\ref{durer:magic:square}) then $\bA, \bP_{3}\bA\bP_{3}, \bP_{8}\bA\bP_{8}$ and $\bP_{11}\bA\bP_{11}$ or 

\begin{align*}
\bA & = \begin{bmatrix}
16 & 3 & 2 & 13 \\
5 & 10 & 11 & 8 \\
9 & 6 & 7 & 12 \\
4 & 15 & 14 & 1 
\end{bmatrix} &
\bP_3\bA\bP_3 & = \begin{bmatrix}
16 & 2 & 3 & 13 \\
9 & 7 & 6 & 12 \\
5 & 11 & 10 & 8 \\
4 & 14 & 15 & 1
\end{bmatrix} \\
\bP_8 \bA\bP_8 & = \begin{bmatrix}
10 & 5 & 8 & 11 \\
3 & 16 & 13 & 2 \\
15 & 4 & 1 & 14 \\
6 & 9 & 12 & 7
\end{bmatrix} & 
\bP_{11} \bA \bP_{11} & = \begin{bmatrix}
11 & 5 & 8 & 10 \\
14 & 4 & 1 & 15 \\
2 & 16 & 13 & 3 \\
7 & 9 & 12 & 6
\end{bmatrix}¥
\end{align*}
as well as the 8 rotations and reflections of each is the \emph{family of transformations of the D\"urer magic square}.   In general, the following theorem explains this.

\begin{theorem} \label{thm:family:magics}
Let $P$ be the set of bisymmetric or 90\textdegree-symmetric permutation matrices of order $n$, where only one of $\bP$ or $\bJ\bP$ is included in $P$.  Let $\bA$ be an order-$n$ magic square arising from a set of non-repeating numbers and $\bP_i \in P$, then $\{ \bP_i\bA \bP_i, \bJ \bP_i \bA \bP_i, \bP_i \bA \bP_i\bJ, \bJ\bP_i \bA \bP_i \bJ, \bP_i\bA^T \bP_i, \bJ \bP_i \bA^T \bP_i, \bP_i \bA^T \bP_i\bJ, \bJ\bP_i \bA^T \bP_i \bJ\}$ is a set of unique magic squares.  The size of this set is $\rho(n)=4(B(n)+R(n))$.

\end{theorem}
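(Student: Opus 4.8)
\section*{Proof proposal for Theorem \ref{thm:family:magics}}

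The plan is to identify, for each fixed $\bP_i\in P$, the eight listed matrices as the images under the map $\Phi_i$ of pre- and post-multiplication by $\bP_i$ (that is, $\Phi_i(\bA')=\bP_i\bA'\bP_i$) of the eight rotations and reflections $\bA,\bJ\bA,\bA\bJ,\bJ\bA\bJ,\bA^T,\bJ\bA^T,\bA^T\bJ,\bJ\bA^T\bJ$ from Theorem \ref{thm:8magics}. Granting this, the count follows as follows. By Lemmas \ref{lemma:JP:bisymm} and \ref{lemma:JP:90rot} the map $\bP_i\mapsto\bJ\bP_i$ sends each family (bisymmetric, 90\textdegree-symmetric) into itself; it has no fixed point since $\bJ\neq\bI$ for $n\ge2$; and no permutation matrix is both bisymmetric and 90\textdegree-symmetric (that would force $\bJ=\bI$). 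Hence the two families are disjoint and pair up under $\bP_i\mapsto\bJ\bP_i$, so a set $P$ keeping one representative of each pair has $|P|=\tfrac12\bigl(B(n)+R(n)\bigr)$. If each $\bP_i$ contributes eight distinct magic squares and distinct representatives contribute disjoint octets, the total is $8\,|P|=4\bigl(B(n)+R(n)\bigr)=\rho(n)$.

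The key preliminary I would establish first is that every bisymmetric or 90\textdegree-symmetric permutation matrix commutes with $\bJ$: for bisymmetric $\bP_i$ we have $\bJ\bP_i=(\bJ\bP_i)^T=\bP_i^T\bJ=\bP_i\bJ$, and for 90\textdegree-symmetric $\bP_i$, transposing $\bP_i^T=\bJ\bP_i$ gives $\bP_i=\bP_i^T\bJ=\bJ\bP_i\bJ$, whence again $\bJ\bP_i=\bP_i\bJ$. Pushing the factors of $\bJ$ through $\bP_i$ then shows each of the eight listed matrices equals $\Phi_i$ applied to one of the eight rotations/reflections; for example $\bJ\bP_i\bA^T\bP_i\bJ=\bP_i(\bJ\bA^T\bJ)\bP_i$. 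That each is a magic square is then immediate, since the rotations and reflections of $\bA$ are magic by Theorem \ref{thm:8magics} and $\Phi_i$ preserves magic-ness by Theorem \ref{thm:bisymm:perm} or \ref{thm:90sym}.

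To see that each octet has eight \emph{distinct} elements I would use the non-repeating hypothesis. If any nontrivial rotation or reflection fixed $\bA$, then two distinct cells would carry equal entries --- for instance $\bA=\bA^T$ forces $a_{ij}=a_{ji}$ with $i\neq j$, and $\bA=\bJ\bA$ forces two equal rows --- which is impossible when the entries are distinct and $n\ge2$. Thus the eight rotations/reflections are pairwise distinct, and since $\Phi_i$ is a bijection on matrices, so are their images.

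The heart of the proof, and the step I expect to be hardest, is disjointness across distinct representatives. Suppose some matrix lies in both the $\bP_i$- and $\bP_j$-octets, say $\bP_i\bA'\bP_i=\bP_j\bA''\bP_j$ with $\bA',\bA''$ rotations/reflections of $\bA$. Pre- and post-multiplying by $\bP_i^{-1}$ gives $\bA'=(\bP_i^{-1}\bP_j)\,\bA''\,(\bP_j\bP_i^{-1})$, exhibiting $\bA'$ and $\bA''$ as related by a row permutation on the left and a column permutation on the right --- a cell bijection of \emph{product} form. But $\bA'$ and $\bA''$ also differ by a single grid symmetry in the dihedral group, and because $\bA$ has distinct entries the value-matching bijection between the cells of $\bA'$ and $\bA''$ is \emph{unique}; hence this grid symmetry must itself be of product form. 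A transpose-type symmetry sends $(i,j)\mapsto(j,i)$ and is never a product of a row- and a column-permutation, so the symmetry relating $\bA'$ and $\bA''$ lies in the Klein four-subgroup generated by left- and right-multiplication by $\bJ$. Reading off the induced left and right factors and matching them against $\bP_i^{-1}\bP_j$ and $\bP_j\bP_i^{-1}$ --- where the commutation $\bJ\bP_i=\bP_i\bJ$ is exactly what keeps the bookkeeping consistent, and where the mixed cases with only a single reversal are inconsistent --- forces $\bP_j\in\{\bP_i,\bJ\bP_i\}$. Since $P$ never contains both $\bP_i$ and $\bJ\bP_i$, distinct representatives give disjoint octets, completing the count. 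The two indispensable ingredients are thus the commutation with $\bJ$ and the uniqueness of the value-matching bijection supplied by the non-repeating entries.
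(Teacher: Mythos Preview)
Your approach is essentially the same as the paper's: identify the eight listed matrices as the dihedral orbit of $\bP_i\bA\bP_i$ (equivalently, as $\Phi_i$ applied to the dihedral orbit of $\bA$, via the commutation $\bJ\bP_i=\bP_i\bJ$), invoke Theorems~\ref{thm:8magics}, \ref{thm:bisymm:perm}, \ref{thm:90sym} for magic-ness, and then count using $|P|=\tfrac12(B(n)+R(n))$. Your treatment of the cross-octet disjointness---arguing that the value-matching cell bijection is unique and hence must lie in the Klein four subgroup, forcing $\bP_j\in\{\bP_i,\bJ\bP_i\}$---is in fact considerably more careful than the paper's, which simply asserts that distinct $\bP_i$ give distinct $\bP_i\bA\bP_i$ and that excluding $\bJ\bP_i$ from $P$ suffices.
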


\begin{proof} For a given $\bP_i$, the 8 matrices listed is the matrix $\bP_i\bA\bP_i$ and the 7 related matrices of Theorem \ref{thm:8magics} formed by reflecting and rotating. Each of these is unique since rotating and reflecting results in a unique matrix.  Also, for a given $\bA$, the matrices $\bP_i\bA\bP_i$ are unique, since each $\bP_i$ is unique.   Lastly, if $\bP_i \in P$ and $\bJ\bP_i \notin P$, then each rotation and reflection of $\bP_i\bA\bP_i$ is unique.

Since only half of the bisymmetric and 90\textdegree-symmetric permutation matrices $\bP_i$ are included in the set $P$, the size of the set for a given order is half of the number of rotations and reflections (eight) times the number of permutation matrices in the set or $B(n)+R(n)$, where these are given in Lemmas \ref{lemma:number:bis:perms} and \ref{lemma:number:90sym}. 

\end{proof}

The following table lists the number of unique magic squares in a family of transformations of a given magic square, for $3 \leq n \leq 10$.  Also, the total number of magic squares for a given order $n$ will be a multiple of this number.  

\begin{center}
 \begin{tabular}{|l|l|l|l|l|l|l|l|l|}\hline
$n$ & 3 & 4 & 5 & 6 & 7 & 8 & 9 & 10 \\ \hline
$4(B(n)+R(n))$ & 8 & 32 & 32 & 80 & 80 & 352 & 352 & 1248 \\ \hline
\end{tabular}
\end{center}

The total number of natural order-3 magic square matrices is 8, so the transformations account for all of them.  In the case of order-4 magic squares, there are 7040 unique matrices, a factor of $\rho(4)=32$, and Richard Schroeppel (given credit posthumously by Martin Gardner \cite{gardner:1976}) found that there are 2,202,441,792 order-5 natural magic square matrices, also a factor of $\rho(5)=32$.  The number of higher-order magic squares are only estimates (see \cite{pinn:1998} for estimates for order 6), but $\rho(n)$ gives a factor of magic squares and would provide a check on the accuracy and the family of magic squares in Theorem \ref{thm:family:magics} can aid in the computation of all magics of a given order in that finding a single magic square results in finding all $\rho(n)$ magic squares is its family.  

The factor of 32 for the order-5 magic squares was shown by Gardner \cite{gardner:1976} in that he states that in addition to the 8 reflections and rotations, there are two other transformations:

\begin{enumerate}[label=\arabic*)]
\item ``Exchange the left and right border columns, then exchange the top and bottom border rows.''
\item ``Exchange rows 1 and 2 and rows 4 and 5.  Then exchange columns 1 and 2, then columns 4 and 5.'' 
\end{enumerate}
And thus he arrives at $4 \cdot 2 \cdot 2 \cdot 2 = 32$ \emph{isomorphic} magic squares---four from the rotations, 2 from the reflections, and 2 each from the two above transformations.  

In light of our work here, the order-5 bisymmetric permutation matrices
\begin{align*}
\bP_{106} & = \begin{bmatrix}
0 & 0 & 0 & 0 & 1 \\
0 & 1 & 0 & 0 & 0 \\
0 & 0 & 1 & 0 & 0 \\
0 & 0 & 0 & 1 & 0 \\
1 & 0 & 0 & 0 & 0
\end{bmatrix} &
\bP_{26} & = \begin{bmatrix}
0 & 1 & 0 & 0 & 0 \\
1 & 0 & 0 & 0 & 0 \\
0 & 0 & 1 & 0 & 0 \\
0 & 0 & 0 & 0 & 1 \\
0 & 0 & 0 & 1 & 0
\end{bmatrix}
\end{align*}
where the numbers are the standard numbering system, are two of the 6 bisymmetric permutation matrices.  If we transform an order-5 magic square $\bA$ as $\bP_{105}\bA\bP_{105}$ and $\bP_{26}\bA\bP_{26}$ the result will be the transforms 1) and 2) listed above.  According to Gardner, these two transformations can be composed, and the other factor of two (in the counting of isomorphic magic squares) results in applying $\bP_{105}\bP_{26}\bA\bP_{26}\bP_{105}$.  The matrix multiplication
\begin{align*}
\bP_{105}\bP_{26} & = 	
\begin{bmatrix}
0 & 1 & 0 & 0 & 0 \\
0 & 0 & 0 & 0 & 1 \\
0 & 0 & 1 & 0 & 0 \\
1 & 0 & 0 & 0 & 0 \\
0 & 0 & 0 & 1 & 0
\end{bmatrix}
\end{align*}
is $\bP_{45}$, a 90\textdegree-symmetric permutation matrix.  Thus our counting of the order-5 isomorphic magic squares is equivalent to Gardner's. 

\subsection{Pandiagonal and Semipandiagonal Magic Squares}

There are two other categories of magic squares that are often studied.  A \textbf{pandiagonal} magic square is one in which all of the \emph{broken} diagonals also add to the magic number.  For example, 
\begin{align}
\begin{bmatrix}
1 & 14 & 4 & 15 \\
8 & 11 & 5 & 10 \\
13 & 2 & 16 & 3 \\
12 & 7 & 9 & 6
\end{bmatrix} \label{ex:pandiagonal} 
\end{align}
is a pandiagonal magic square since it is a magic square (with $\mu=34$) as well as  $a_{j,1}+a_{j+2,2}+a_{j+3,3}+a_{j+4,4}=\mu$ and $a_{j,1}+a_{j+1,4}+a_{j+2,3}+a_{j+3,2}=\mu$ for $j=1,2,3,4$ and addition in the subscripts are performed modulo 4.  (See \cite{pickover:2002} for more details about pandiagonal magic squares.)  Examination of these matrices in the context of permutation matrices are found by the shifts of the identity matrix.  That is, for order-4:
\begin{align*}
\bP_{10} & =
\begin{bmatrix}
0 & 1 & 0 & 0 \\
0 & 0 & 1 & 0 \\
0 & 0 & 0 & 1 \\
1 & 0 & 0 & 0
\end{bmatrix},
&
\bP_{17} & = 
\begin{bmatrix}
0 & 0 & 1 & 0 \\
0 & 0 & 0 & 1 \\
1 & 0 & 0 & 0 \\
0 & 1 & 0 & 0
\end{bmatrix},
&
\bP_{19} & = 
\begin{bmatrix}
0 & 0 & 0 & 1 \\
1 & 0 & 0 & 0 \\
0 & 1 & 0 & 0 \\
0 & 0 & 1 & 0
\end{bmatrix}.
\end{align*}
We can then use these permutation matrices to shift the broken diagonals to the main diagonals.  For example, if $\bA$ is the magic square in (\ref{ex:pandiagonal}) then  
\begin{align*}
\bA\bP_{10} & = \begin{bmatrix}
15 & 1 & 14 & 4 \\
10 & 8 & 11 & 5 \\
3 & 13 & 2 & 16 \\
6 & 12 & 7 & 9
\end{bmatrix},
&
\bA \bP_{10} \bJ & = \begin{bmatrix}.
4 & 14 & 1 & 15 \\
5 & 11 & 8 & 10 \\
16 & 2 & 13 & 3 \\
9 & 7 & 12 & 6
\end{bmatrix}.
\end{align*}
The matrix $\bA\bP_{10}$ has shifted the broken diagonal below the main diagonal to the main diagonal and $\bA\bP_{10}\bJ$ has shifted the broken diagonal above the minor diagonal to the main diagonal.   When transformed as $\bA\bP$ and $\bA\bP\bJ$, the permutation matrices $\bP_{17}$ and $\bP_{19}$ shift the other broken diagonal to the main diagonal.  

In this light, we can formally define a pandiagonal magic square using permutation matrices.  

\begin{definition}
If $\bA$ is a magic square with magic number $\mu$, then $\bA$ is \textbf{pandiagonal} if $\bA$ satisfies both 
\begin{align*}
\tr(\bA\bP)& = \mu & & \text{and} &  \tr(\bA\bP\bJ)& = \mu,
\end{align*}
for all $\bP$ which are shifts of the identity matrix.  
\end{definition}
Although the examples above used only order-4 matrices, the formal definition can be extended to any order.  Pandiagonal magic squares of higher order are often studied.  

Another often-used classifying criteria is that of a semipandiagonal or semi-Nasik magic square.  In this case only two of the broken diagonals are used---in the order 4 case only the one corresponding to $\bP_{17}$ and $\bP_{17}\bJ$.   Again, using permutation matrices, we can now extend the definition more formally. 
\begin{definition}
If $\bA$ is a magic square with magic number $\mu$, then $\bA$ is \textbf{semipandiagonal} if 
\begin{align*}
\tr(\bA\bP) &=\mu &&\text{and}& \tr(\bA\bP\bJ) & = \mu
\end{align*}
where $\bP$ is the shift of the identity matrix with $\bP$ symmetric.  
\end{definition}
  
This definition allows the generalization of semi-pandiagonal magic squares to orders higher than 4.  There is no permutation matrix of order 5 which is a shift of the identity matrix and symmetric.  For order 6, the permutation matrix that play the important role in the definition is:
\begin{align*}
\begin{bmatrix}
0 & 0 & 0 & 1 & 0 & 0 \\
0 & 0 & 0 & 0 & 1 & 0 \\
0 & 0 & 0 & 0 & 0 & 1 \\
1 & 0 & 0 & 0 & 0 & 0 \\
0 & 1 & 0 & 0 & 0 & 0 \\
0 & 0 & 1 & 0 & 0 & 0 
\end{bmatrix}, 
\end{align*}
and an example of an order-6 semipandiagonal magic square is
\begin{align*}
\begin{bmatrix}
19 & 18 & 7 & 14 & 29 & 33 \\
27 & 8 & 26 & 4 & 34 & 21 \\
23 & 31 & 36 & 15 & 13 & 2 \\
39 & 6 & 5 & 17 & 25 & 28 \\
1 & 22 & 37 & 32 & 16 & 12 \\
11 & 35 & 9 & 38 & 3 & 24
\end{bmatrix}
\end{align*} 
where $\mu=120$.  

\section{Classifying Dudeney Types using Permutation Matrices} \label{sect:dudeney}

Next, we examine the work of Dudeney \cite{dudeney:1917} and his twelve order-4 types of magic squares which is shown in Figure \ref{fig:order4-class}.  We first investigate a simple question for all order-4 magic squares: if $\bP$ is a permutation matrix, and $\bA_q$ is a magic square of type $q$, for which permutation matrices is $\bP\bA_q\bP$ a magic square and what type is the result?  In investigating this, we are able to generalize types I--VI to higher order magic squares and show properties of their determinants and eigenspectra.

The directed graphs in Figure \ref{fig:order4:trans} summarize the results of transforming all order-4 magic squares under $\bP\bA\bP$ for a permutation matrix $\bP$.  The upper-case letter types (A--D) listed above each graph are the same as the classification of Trigg \cite{trigg:1948} who grouped Dudeney's roman numeral types.  Within each of the four graphs, the labelled arrows denote which sets of permutation matrices transform to other magic squares. The vertices of each graph are the Dudeney type.  For example within type A, permutation matrices in the set $A_2$ transform type I magic squares to type III magic squares (and vice versa).   

\begin{figure}

\begin{tabular}{p{3.25in}p{3.25in}}
\begin{minipage}{3in}
\begin{center}
{\large \textbf{type A}}

\psset{arcangle=12,arrowsize=0.2,nodesep=3pt}
\begin{pspicture}(-2,-2)(6,4.75)
\cnodeput{0}(2,3){A}{I}
\cnodeput{0}(0,0){B}{II}
\cnodeput{0}(4,0){C}{III}
\ncline{<->}{A}{B} \tlput{$A_4$}
\ncline{<->}{A}{C} \trput{$A_2$}
\ncline{<->}{C}{B} \tbput{$A_3$}
\nccircle{->}{A}{.4}
\nccircle[angleA=135]{->}{B}{.4}
\nccircle[angleA=225]{->}{C}{.4}
\uput[90](2,4){$A_1,A_3$}
\uput[180](-0.7,-0.6){$A_1,A_2$} 
\uput[0](4.7,-0.6){$A_1,A_4$} 
\end{pspicture}
\end{center}
\end{minipage}
& 
\begin{minipage}{3in}
\begin{center}
{\large \textbf{type B}}
 \psset{arcangle=12,arrowsize=0.2,nodesep=3pt}
\begin{pspicture}(-2,-2)(6,4.75) 
\cnodeput{0}(0,3){A}{IV}
\cnodeput{0}(0,0){B}{V}
\cnodeput{0}(3,0){C}{VI$'$}
\cnodeput{0}(3,3){D}{VI$''$}
\ncline{<->}{A}{B} \tlput{$A_4$}
\ncline{<->}{A}{C} \trput[tpos=0.6]{$A_3$}
\ncline{<->}{C}{B} \tbput{$A_2$}
\ncline{->}{D}{B} \tbput[tpos=0.45]{$A_2$}
\ncline{->}{D}{A} \taput{$A_3$}
\nccircle{->}{A}{.4}
\nccircle[angleA=135]{->}{B}{.4}
\nccircle[angleA=225]{->}{C}{.4}
\nccircle[angleA=300]{->}{D}{.4}
\uput[90](0,4){$A_1,A_2$}
\uput[180](-0.7,-0.6){$A_1,A_3$} 
\uput[0](3.7,-0.6){$A_1,A_4$} 
\uput[45](3,3.75){$A_1,A_4$} 
\end{pspicture}
\end{center}
\end{minipage}
 \\
\begin{minipage}{3in	}
\begin{center}
{\large \textbf{type C}}

\psset{arcangle=12,arrowsize=0.2,nodesep=3pt}
\begin{pspicture}(-2,-1)(6,4.5) 
\cnodeput{0}(0,0){A}{X}
\cnodeput{0}(0,3){B}{VII}
\cnodeput{0}(4,3){C}{VIII}
\cnodeput{0}(4,0){D}{IX}
\ncline{<->}{A}{B} \tlput{$C_2$}
\ncline{<->}{A}{C} \trput{$C_3$}
\ncline{<->}{C}{B} \taput{$C_4$}
\ncline{<->}{A}{D} \tbput{$C_4$}
\ncline{<->}{C}{D} \trput{$C_2$}
\ncline{<->}{B}{D}\tlput{$C_3$}
\nccircle[angleA=135]{->}{A}{.4}
\nccircle[angleA=45]{->}{B}{.4}
\nccircle[angleA=-45]{->}{C}{.4}
\nccircle[angleA=-135]{->}{D}{.4}
\uput[-90](-1.2,3.5){$C_1$}
\uput[180](-0.7,-0.6){$C_1$} 
\uput[0](4.7,-0.6){$C_1$} 
\uput[90](5.2,3.0){$C_1$}
\end{pspicture}

\end{center}
\end{minipage}
&
\begin{minipage}{3in	}
\begin{center}
{\large \textbf{type D}}

\psset{arcangle=12,arrowsize=0.2,nodesep=3pt}
\begin{pspicture}(-2,-1)(6,4.5) 
\cnodeput{0}(0,1.5){A}{XI}
\cnodeput{0}(3,1.5){B}{XII}
\ncline{<->}{A}{B} \taput{$A_4$}
\nccircle[angleA=90]{->}{A}{.4}
\nccircle[angleA=-90]{->}{B}{.4}
\uput[180](-1,1.5){$A_1$}
\uput[0](4,1.5){$A_1$} 
\end{pspicture}

\end{center}
\end{minipage}
\end{tabular}

\begin{caption}
A summary of the transformation of order-4 magic squares.  If $\bA$ is magic, and $\bP$ is a permutation matrix, then the diagrams show for which permutation matrices is $\bP\bA\bP$ magic and the Dudeney type of the original and transformed matrix.  The vertices of the graphs are the Dudeney types and the arrows are labelled with the transformation sets:  $A_1 = \{\bP_1, \bP_8,\bP_{17},\bP_{24}\},  A_2  = \{\bP_2, \bP_7,\bP_{18},\bP_{23} \}, A_3  = \{\bP_6,\bP_{10},\bP_{15},\bP_{19}\},   A_4 = \{\bP_3,\bP_{11},\bP_{14},\bP_{22} \}, C_1  = \{\bP_{1},\bP_{24}\},  C_2  =\{\bP_{3},\bP_{22}\},  C_3 = \{\bP_{8},\bP_{17}\},  C_4  = \{\bP_{11},\bP_{14}\}$, where the subscripts are the index of the permutation matrices in standard form.  The types (A--D) listed above each graph correspond to the types that Trigg \cite{trigg:1948}  defined in terms of Dudeney's roman numerical types.  Also, the set VI$'$ is the type VI that are also semipandiagonal, whereas VI$''$ is not semipandiagonal.  \end{caption}
\label{fig:order4:trans}
\end{figure}

As can be seen in Figure \ref{fig:order4:trans}, types I-III (type A) are always transformed within that set, as are types IV--VI (type B), VII-X (type C) and lastly the set within XI and XII (type D).  As was shown in Theorems \ref{thm:bisymm:perm} and \ref{thm:90sym}, the set of bisymmetric and 90\textdegree-symmetric permutation matrices 
transform magic squares to other magic squares.  This set is either $A_1 \bigcup A_4$ or $C_1 \bigcup C_2 \bigcup C_3 \bigcup C_4$.  

In the case of types I--VI, there are additional permutation matrices (the set $A_2 \bigcup A_3$) that transform some magic squares to other squares.   In addition, type VI magic squares do not transform in a consistent manner and thus to be fully understood has be subdivided into two sets VI$'$, which are type VI magic squares that are semipandiagonal and VI$''$, which do not have the property.  

\subsection{Magic Classifying Permutation Matrices} 

The transformations of type A and B magic squares can be generalized to higher-order magic squares.  We will show that the generalized type A (and type B) magic squares can be transformed into other type A (and type B) magic squares using other permutation matrices.  We first start defining the type I and II order-4 magic squares in a manner similar to that of Mattingly \cite{mattingly:2000}. If 
\begin{align} \label{def:LandK}
	\bK & = 
\begin{bmatrix}
	0 & 0 & 1 & 0 \\ 0 & 0 & 0 & 1 \\
	1 & 0 & 0 & 0 \\ 0 & 1 & 0 & 0 
\end{bmatrix} & &\text{and}& 
\bL& = 
\begin{bmatrix}
	0 & 1 & 0 & 0 \\ 1 & 0 & 0 & 0 \\ 0 & 0 & 0 & 1 \\ 0 & 0 & 1 & 0 
\end{bmatrix}
\end{align}
then if $\bA_I$ is a type I magic square then $\bA_I$ satisfies
\begin{align}
	\bA_I + \bK \bA_I \bK =\frac{2\mu}{n}\bE , \label{eq:typeI:perm}
\end{align}
and if $\bA_{II}$ is a type II magic square then $\bA_{II}$ satisfies
\begin{align}
	\bA_{II} + \bL \bA_{II} \bL =\frac{2\mu}{n}\bE . \label{eq:typeII:perm}
\end{align}
Equations (\ref{eq:typeI:perm}) and (\ref{eq:typeII:perm}) are analogs of  (\ref{eq:regular:perm}) for types I and II.  We illustrate the relationship between the diagrams of Dudeney with this equation through an example.  The matrix 
\begin{align*}
 \bA = 
\begin{bmatrix}
 1 & 14 & 4 & 15 \\
8 & 11 & 5 & 10 \\
13 & 2 & 16 & 3 \\
12 & 7 & 9 & 6
\end{bmatrix}
\end{align*}
is a natural order-4 magic square ($\mu=34$) that is type I.  If we substitute this into (\ref{eq:typeI:perm}),  
\begin{align*}
 \bA + \bK \bA \bK & = 
\begin{bmatrix}
 1 & 14 & 4 & 15 \\
8 & 11 & 5 & 10 \\
13 & 2 & 16 & 3 \\
12 & 7 & 9 & 6
\end{bmatrix} + \begin{bmatrix}
16 & 3 & 13 & 2 \\
9 & 6 & 12 & 7 \\
4 & 15 & 1 & 14 \\
5 & 10 & 8 & 11 \end{bmatrix} = 
\begin{bmatrix}
 17 & 17 & 17 & 17 \\
  17 & 17 & 17 & 17 \\
   17 & 17 & 17 & 17 \\
    17 & 17 & 17 & 17 \\
\end{bmatrix} = \frac{2\mu}{n} \bE,
\end{align*}
where $\mu=34$ and $n=4$, so the type I matrix $\bA$ satisfies (\ref{eq:typeI:perm}).  

Magic squares of type IV, V,  VI, XI and XII can also be written in terms of permutation matrices.  Let $\bA_q$ be a magic square of type $q$, then the following are true: 
\begin{align}
	\bA_{IV} +\bL \bA_{IV} & = \frac{2\mu}{n} \bE, \label{eq:typeIV:perm}\\
	\bA_{V} + \bK \bA_{V} & =  \frac{2\mu}{n} \bE, \label{eq:typeV:perm}\\
	\bA_{VI} + \bJ \bA_{VI} & =  \frac{2\mu}{n} \bE, \label{eq:typeVI:perm}\\
	\bA_{XI} + \bL \bA_{XI} \bP_3
& =  \frac{2\mu}{n}\bE, \label{eq:typeXI:perm} \\
	\bA_{XII} + \bK \bA_{XII} \bP_2
& =  \frac{2\mu}{n} \bE. \label{eq:typeXII:perm}
\end{align}
where $\bP_3$ is defined in (\ref{eq:bis:perm:order4}) and $\bP_2=(1~2~4~3)$, written in compact notation.   

Initially, we examine types I--III  which are written as equations using matrix operations in  (\ref{eq:regular:perm}), (\ref{eq:typeI:perm}) and (\ref{eq:typeII:perm}) and note that $\bK, \bL$ and $\bJ$ play an important role in these equations.  Those matrices will be called magic classification permutation matrices in part because these permutation matrices can be used to classify a magic square in a way similar to that of Dudeney.

  \begin{definition}\label{def:mcpm}
	A \textbf{magic classifying permutation matrix} or \textbf{MCPM} is a symmetric permutation matrix, $\bP$ of order $n$ and 
	
\begin{itemize}
 \item if $n$ is even then  $\tr(\bP)=0$. 
 \item if $n$ is odd then the middle element of $\bP$ is 1 and $\tr(\bP)=1$. 
\end{itemize}

\end{definition}

The matrix $\bJ$ is the only MCPM of order 2 and 3 and the matrices $\bJ, \bL$ and $\bK$ are the only magic classification permutation matrices of order $4$.  We note that $\bK=\bP_{17}$ and $\bL=\bP_{8}$ as shown in (\ref{eq:bis:perm:order4}) and the other two bisymmetric permutation matrices in (\ref{eq:bis:perm:order4}), $\bP_3$ and $\bP_{22}$ are not MCPMs, nor are $\bP_{11}$ (seen in (\ref{eq:rot90:perm:ex}) and $\bP_{14}$, the two 90\textdegree-symmetric permutation matrices.

Before showing how to generalize the types I--III order-4 magic squares, it will be helpful to know how many MCPMs there are of a given order.

\begin{lemma} \label{lemma:pn}
Let $C(n)$ denote the number of MCPMs of order $n\geq 2$, then
\begin{align*}
 C(n) & = 
\begin{cases}
 (n-1)!! & \text{if $n$ is even}, \\
 (n-2)!! & \text{if $n$ is odd}. 
\end{cases}
\end{align*}
\end{lemma}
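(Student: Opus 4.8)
The plan is to recast the counting of MCPMs as a problem about a special class of permutations. Recall that a permutation matrix $\bP$ satisfies $\bP^T=\bP^{-1}$, so the symmetry requirement $\bP=\bP^T$ of Definition~\ref{def:mcpm} is equivalent to $\bP^2=\bI$; that is, the permutation $\sigma$ associated with $\bP$ is an \emph{involution}. Writing $\sigma$ as a product of disjoint cycles, the involution condition forces every cycle to be a fixed point or a transposition. Moreover $\tr(\bP)$ equals the number of indices $i$ with $\bP_{i,i}=1$, which is exactly the number of fixed points of $\sigma$. Thus an MCPM is precisely an involution whose fixed-point set is prescribed by the trace condition, and the whole lemma reduces to enumerating such involutions.

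The next step is to translate the two cases of Definition~\ref{def:mcpm} into conditions on $\sigma$. When $n$ is even, $\tr(\bP)=0$ says $\sigma$ has no fixed points, so $\sigma$ is a product of exactly $n/2$ disjoint transpositions, i.e. a perfect matching of the index set $\{1,\dots,n\}$. When $n$ is odd, requiring the middle entry to equal $1$ means $\sigma$ fixes the central index $(n+1)/2$, and $\tr(\bP)=1$ forces this to be the \emph{only} fixed point; hence $\sigma$ restricts to a fixed-point-free involution, again a perfect matching, on the remaining $n-1$ indices. In both cases the enumeration reduces to counting perfect matchings of a set of even size.

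Hence I would finish by counting perfect matchings on a set of $2m$ elements, which I expect to be the crux (although it is a standard computation). Pairing the smallest unmatched index with any of the other $2m-1$ indices and recursing on the remaining $2m-2$ elements gives the recursion $M(2m)=(2m-1)\,M(2m-2)$ with $M(0)=1$, whose solution is the double factorial $M(2m)=(2m-1)!!$. For even $n$ this yields $C(n)=(n-1)!!$ directly. For odd $n$ the central index is forced to be fixed and the other $n-1$ (an even number) of indices must be perfectly matched, so $C(n)=M(n-1)=((n-1)-1)!!=(n-2)!!$, matching the claimed formula; the base cases $C(2)=1!!=1$ and $C(3)=1!!=1$ recover the matrices $\bJ$, while $C(4)=3!!=3$ recovers $\bJ,\bL,\bK$. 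The only real obstacle is making the involution/fixed-point dictionary precise and confirming that, in the odd case, pinning the middle entry to $1$ is compatible with $\tr(\bP)=1$, so that no double counting or over-restriction occurs; once that is settled, the double-factorial recursion completes the proof.
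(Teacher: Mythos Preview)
Your proof is correct and follows essentially the same approach as the paper: both arrive at the recursion $C(n)=(n-1)\,C(n-2)$ for even $n$ (and $C(n)=C(n-1)$ for odd $n$) by pairing one index with a partner and recursing on the remaining indices. You frame this via the dictionary between symmetric permutation matrices and involutions, identifying MCPMs with perfect matchings, whereas the paper argues the same recursion directly by placing a $1$ in the first row of the matrix and observing that the symmetrically forced entries leave an $(n-2)\times(n-2)$ MCPM to fill; the underlying combinatorics is identical.
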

\begin{proof}
We will prove this result by constructing even-order MCPMs, then showing the relationship between even and odd order.   

The only MCPM of order 2 is $\bJ$, so $C(2)=1$. Let $\bP$ be a MCPM of order $n\geq 4$ and let $j$ satisfy $2 \leq  j \leq n$ and be the column of the 1 in the first row of the matrix $\bP$;  row 1 is $\be_j^T$ and column $j$ is $\be_1$.  (Note that to satisfy $\tr(\bP)=0$, the 1 cannot lie in column 1.)    Also, by symmetry column 1 is $\be_j$ and the $j$th row is  $\be_1^T$. The remainder of the matrix is the submatrix $\bP[(2..j-1,j+1..n),(2..j-1,j+1..n)]$ and must be a MCPM of order $n-2$, which can be constructed in the same way. 

Thus the number of MCPMs of order $n$ is the number of choices for placing the 1 in the first row times the number of MCPMs of order $n-2$ or $C(n)=(n-1) \cdot C(n-2)$.  Since $C(2)=1$, for $n$ an even number $C(n)=(n-1)!!$. 

If $n$ is odd, the middle term must be a 1 by definition, and the submatrix resulting from removing the middle column and middle row is an order-$(n-1)$ MCPM.  Thus the number of odd magic permutation matrices is the same as that of the even-ordered matrix just smaller than it or $(n-2)!!$.  
\end{proof}

As an example, we show how to create a MCPM of order 6.  We start with a 1 in the 4th column of the first row (and could have chosen any column between 2 and 6) and due to the fact that a MCPM is symmetric and a permutation matrix, so far this looks like:
\begin{align*} 
\begin{bmatrix}
 0 &0  & 0& 1 & 0& 0\\
 0 & & & 0 & & \\
 0 & & & 0 & & \\
 1 & 0 & 0 & 0 & 0 & 0 \\
 0 &  & & 0 & & \\
 0 & & & 0 & & 
\end{bmatrix}
\end{align*}

The remaining submatrix consisting of the empty matrix elements above, must also be a MCPM.  There are three choices for this: $\bK, \bL,$ or $\bJ$.  To continue the example, consider $\bL$ as the submatrix and the result is: 
\begin{align}  \label{eq:ex:order6:MCPM} 
\bP & = \begin{bmatrix}
 0 & 0 & 0 & 1 & 0 & 0 \\
 0 & 0 & 1 & 0 & 0 & 0 \\
 0 & 1 & 0 & 0 & 0 & 0 \\
 1 & 0 & 0 & 0 & 0 & 0 \\
 0 & 0 & 0 & 0 & 0 & 1 \\
 0 & 0 & 0 & 0 & 1 & 0 
 \end{bmatrix}
\end{align}
This order-6 example is one of $5!!$ or 15 MCPMs.  The first step in creating them is to place a 1 in one of the 2nd through 6th columns of the first row, thus 5 choices.  By symmetry, there is a 1 in the same row of the first column.  Then the resulting 4 by 4 submatrix can be either $\bK, \bL$ or $\bJ$.


\subsection{Classification of Magic Squares by Trigg Type}

Trigg \cite{trigg:1948} found the determinant of each of the order-4 880  magic squares (or 7040 unique matrices) based on categories of Dudeney types.  In his proof, he classifies all 4 by 4 magics into four types (labelled A through D) as discussed above.  Trigg's proof involves operations to change types II and III into a type I magic square.  Precisely, he says for a type II magic: \emph{Interchange 2nd and 3rd rows, and then interchange 2nd and 3rd columns to get Type I.}  To show how this language is equivalent to using permutation matrices, recall that a type II magic square satisfies (\ref{eq:typeII:perm}) and let $\bA_{II}$ be a magic square of type II.  Interchanging the 2nd and 3rd rows as well as the 2nd and 3rd columns of $\bA_{II}$, a type II matrix, is $
	\bA = \bP_3 \bA_{II} \bP_3$
where $\bP_3$ is defined in (\ref{eq:bis:perm:order4}). When (\ref{eq:typeII:perm}) is multiplied both on the left and right by $\bP_3$, the result is 
\begin{align*}
	\bP_3 \bA_{II}\bP_3 + \bP_3 \bL \bA_{II} \bL \bP_3 = \frac{2\mu}{n} \bP_3 \bE \bP_3 
\end{align*}
and using that $\bP_3\bL=\bK\bP_3$, $\bL\bP_3=\bP_3\bK$  as well as $\bP_3 \bE \bP_3 = \bE$ to get
\begin{align*}
	\bP_3 \bA_{II}\bP_3 + \bK \bP_3 \bA_{II} \bP_3 \bK = \frac{2\mu}{n} \bE. 	
\end{align*}
This shows that the matrix $\bA=\bP_3\bA_{II}\bP_3$ is a type I matrix from (\ref{eq:typeI:perm}).  

The Trigg classification can be linked to the MCPM by the following definition. 

\begin{definition} 
If $\bA$ is a magic square that satisfies 
\begin{align} \label{eq:typeA}
 \bA + \bP \bA \bP & = \frac{2\mu}{n} \bE 
\end{align}
for some MCPM $\bP$, then $\bA$ is called a \textbf{type A magic square}.  If the particular permutation matrix $\bP$ is important, then it is called a \textbf{type $\mbox{A}_{\bP}$ magic square}.
\end{definition}

In this language, the order-4 type I, II, and III magic squares can respectively be called type $\mbox{A}_{\bK}$, $\mbox{A}_{\bL}$ and $\mbox{A}_{\bJ}$ magic squares and any regular magic square is of type $\mbox{A}_{\bJ}$.   This definition also clearly allows the categorization of magic squares to that of any order.   This and the next definition which generalizes type B magics is why we have named this set of permutation matrices, \emph{magic classifying permutation matrices}.

As an example of an order-6 type A magic square, we use a magic square (similar to that of Mattingly) consisting of the set $\{1,2,3,\ldots,38,39\}$ in which the numbers 10, 20 and 30 have been removed.  (A discussion of why we have done this is below.)   The following magic square:
\begin{align}
\bA = \begin{bmatrix}
14 &   33 &    34  &    5 &    12 &    22 \\
19 &    27 &    39 &     8 &    25 &     2 \\
32 &     1 &    13 &    21 &    38 &    15 \\
35 &     6 &     7 &    26 &    18 &    28 \\
11 &    36 &    23 &    31 &     3 &    16 \\
  9 &    17 &     4 &    29  &   24 &    37\\
\end{bmatrix} \label{ex:typeA:6by6}
\end{align}
is a type $\mbox{A}_{\bP}$, where $\bP$ is the MCPM in (\ref{eq:ex:order6:MCPM}) in that it satsifies $\bA+\bP\bA\bP = (2\mu/n)\bE$ where $\mu=120$ and $n=6$.  



%
%
%

\begin{definition}
If $\bA$ is a magic square that satisfies either of the following:
	\begin{align}
 \bA + \bP \bA &  = \frac{2 \mu}{n}\bE,  \label{def:typeBi} \\
  \bA + \bA \bP & = \frac{2 \mu}{n}\bE,  \label{def:typeBii}
\end{align}
for some MCPM, $\bP$ then $\bA$ is called a \textbf{type B magic square}.  If the specific permutation matrix is important, we call the matrix a \textbf{type $\mbox{B}_{\bP}$ magic square}.  
\end{definition}

It is noted that types IV--VI satisfy equation (\ref{def:typeBi}) and on the surface (\ref{def:typeBii}) is unnecessary, however it is needed if magic squares are treated as matrices.  Recall that  Dudeney and Trigg assumed that a magic square that satisfies (\ref{def:typeBii}) could be rotated to one that satisfies (\ref{def:typeBi}).  In a similar manner to that mentioned above, the order-4 magic squares of types IV, V and VI can be called type $\mbox{B}_{\bL}$, $\mbox{B}_{\bK}$ and $\mbox{B}_{\bJ}$.  

Also, as an example of a type B order-6 magic square, 
\begin{align}
\bA & = \begin{bmatrix}
9 & 12 & 28 & 31 & 26 & 14 \\
24 & 36 & 4 & 16 & 21 & 19 \\
32 & 3 & 37 & 8 & 22 & 18 \\
11 & 27 & 13 & 29 & 6 & 34 \\
5 & 25 & 15 & 35 & 7 & 33 \\
39 & 17 & 23 & 1 & 38 & 2
\end{bmatrix}\label{ex:typeB:6by6}
\end{align}
satisfies (\ref{def:typeBii}), where $\bP$ is found in (\ref{eq:ex:order6:MCPM}) and $\mu=120$.  This matrix is singular, has rank 4 and later we will show the rank and determinant of all type B magics.

\subsection{Type A Magic Squares}

In Theorems \ref{thm:bisymm:perm} and \ref{thm:90sym}, we showed that if $\bA$ is a magic square and $\bP$ is either bisymmetric or 90\textdegree-symmetric that   $\bP\bA\bP$ is magic.  This next theorem generalizes some of the results seen in the order-4 examples shown above.

\newcommand{\typeA}[1]{\ensuremath \mbox{A}_{#1}}

%
%

\begin{theorem} \label{thm:convert:magic}
 If $\bQ$ is a bisymmetric permutation matrix, $\bA$ is a type $\mbox{A}_{\bP}$ magic square, and $\bP'=\bQ\bP\bQ$ then $\bQ\bA\bQ$ is a type $\mbox{A}_{\bP'}$ magic square.  
\end{theorem}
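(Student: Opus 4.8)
The plan is to verify three ingredients that together establish the claim: that $\bQ\bA\bQ$ is a magic square, that $\bP'=\bQ\bP\bQ$ is itself an MCPM, and that $\bQ\bA\bQ$ satisfies the defining equation of a type $\mbox{A}_{\bP'}$ magic square. The first is immediate from Theorem \ref{thm:bisymm:perm}: since $\bQ$ is bisymmetric, $\bQ\bA\bQ$ is magic with the same magic number $\mu$. The structural fact I would use repeatedly is that a bisymmetric permutation matrix is in particular symmetric, so $\bQ^T=\bQ$ and hence $\bQ^2=\bQ^T\bQ=\bI$.

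Next I would show $\bP'$ is an MCPM. It is a product of permutation matrices, hence a permutation matrix, and it is symmetric because $(\bQ\bP\bQ)^T=\bQ^T\bP^T\bQ^T=\bQ\bP\bQ$, using that both $\bQ$ and $\bP$ are symmetric. For the trace condition I would combine cyclic invariance of the trace with $\bQ^2=\bI$ to obtain $\tr(\bP')=\tr(\bQ\bP\bQ)=\tr(\bQ\bQ\bP)=\tr(\bP)$, so $\tr(\bP')=0$ when $n$ is even and $\tr(\bP')=1$ when $n$ is odd. In the odd case I must additionally confirm that the middle element of $\bP'$ is $1$; here I would invoke the fact established in the proof of Lemma \ref{lemma:number:bis:perms} that a bisymmetric permutation matrix of odd order carries a $1$ in its center, i.e. $\bQ\be_m=\be_m$ for $m=(n+1)/2$. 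Since $\bP$ is an MCPM we also have $\bP\be_m=\be_m$, so $\bP'\be_m=\bQ\bP\bQ\be_m=\bQ\bP\be_m=\bQ\be_m=\be_m$, showing $\be_m$ is fixed by $\bP'$ and its middle element is $1$.

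Finally I would transform the defining relation. Starting from $\bA+\bP\bA\bP=\frac{2\mu}{n}\bE$ and multiplying on the left and right by $\bQ$ gives
\begin{align*}
\bQ\bA\bQ + \bQ\bP\bA\bP\bQ = \frac{2\mu}{n}\,\bQ\bE\bQ .
\end{align*}
On the right, $\bQ\bE\bQ=(\bQ\be)(\be^T\bQ)=\be\be^T=\bE$, since $\bQ\be=\be$ and $\be^T\bQ=\be^T$ for any permutation matrix. On the left, inserting $\bI=\bQ\bQ$ twice yields $\bQ\bP\bA\bP\bQ=\bQ\bP(\bQ\bQ)\bA(\bQ\bQ)\bP\bQ=(\bQ\bP\bQ)(\bQ\bA\bQ)(\bQ\bP\bQ)=\bP'(\bQ\bA\bQ)\bP'$. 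Hence $\bQ\bA\bQ+\bP'(\bQ\bA\bQ)\bP'=\frac{2\mu}{n}\bE$, which is exactly the condition that $\bQ\bA\bQ$ is type $\mbox{A}_{\bP'}$.

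I expect the only genuinely delicate point to be confirming the middle-element condition for $\bP'$ in the odd-order case, which relies on the center-fixing property of odd-order bisymmetric permutation matrices; every other step reduces to the involutivity $\bQ^2=\bI$, the invariance $\bQ\bE\bQ=\bE$, and cyclic invariance of the trace.
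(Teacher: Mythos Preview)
Your proof is correct and follows essentially the same route as the paper: invoke Theorem~\ref{thm:bisymm:perm} for magicness, conjugate the defining relation by $\bQ$ using $\bQ^2=\bI$ and $\bQ\bE\bQ=\bE$, and check that $\bP'$ is symmetric with $\tr(\bP')=\tr(\bP)$. In fact your argument is slightly more careful than the paper's, which verifies only symmetry and the trace condition for $\bP'$ and does not explicitly address the middle-element requirement in the odd-order case that you handle via the center-fixing property of odd bisymmetric permutations.
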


\begin{proof}
The matrix $\bQ\bA\bQ$ is a magic square due to Theorem \ref{thm:bisymm:perm}. Also, since $\bA$ is a type $\mbox{A}_{\bP}$ magic square, it satisfies (\ref{eq:typeA}).  Multiplying (\ref{eq:typeA}) on both the left and right by $\bQ$ results in
\begin{align*}
 \bQ \bA \bQ + \bQ \bP \bA \bP \bQ & = \frac{2\mu}{n} \bE
\end{align*}
Letting $\bP=\bQ\bP'\bQ$, and using $\bQ\bQ=\bI$ and $\bQ\bE\bQ=\bE$, then 
\begin{align*}
 \bQ \bA \bQ + \bQ \bQ\bP'\bQ \bA\bQ \bP \bQ\bQ   & =  \bQ\bA\bQ+\bP'\bQ\bA\bQ \bP' =\frac{2\mu}{n} \bE
\end{align*}
which shows that $\bQ\bA\bQ$ is a type $\typeA{\bP'}$ magic square.  

Lastly, we need to show that $\bP'$ is a MCPM, that is symmetric and has the same trace as $\bP$.  First,  $\bP'^{T}=(\bQ \bP \bQ)^{T} = \bQ^{T} \bP^{T} \bQ^{T} =\bQ \bP \bQ=\bP'$ and $\tr(\bP')=\tr(\bQ\bP\bQ)=\tr(\bQ\bQ\bP)=\tr(\bP)$ since traces are invariant under cyclic permutations.  Thus $\bP'$ is a MCPM.   

\end{proof}

In Figure \ref{fig:order4:trans}, we presented a set of directed graphs representing the transformations of order-4 magic squares by pre- and post-multiplication by permutation matrices.  For example, a type I (or type $\typeA{\bK}$) matrix is transformed to a type II (or type $\typeA{\bL}$) by $\bP_3$.  Using Theorem \ref{thm:convert:magic}, we let $\bP=\bK$ in (\ref{def:LandK}) and $\bQ=\bP_3$ in (\ref{eq:bis:perm:order4}), then $\bP'=\bQ\bP\bQ=\bP_3\bK\bP_3= \bL$, therefore using $\bP_3$, the type $\typeA{\bK}$ (type I) matrix is transformed to a type $\typeA{\bL}$ (type II) matrix.  


There are permutation matrices other than bisymmetric ones that transform type A magic squares to other type A magic squares.  For example, $\bP_2$ (which is symmetric, but not bisymmetric) transforms a type $\typeA{\bK}$ (type I) to a $\typeA{\bJ}$ (type III)---see Figure \ref{fig:order4:trans}.  From the results in Figure \ref{fig:order4:trans}, the set of permutation matrices that transform type A magic squares have the property that the permutation matrix is singly symmetric.  However, moving to higher-order magic squares, this is not true.   It appears from examining type A magic squares with sizes above 4 by 4, that further analysis is needed.  

The following result shows that any type $\typeA{\bP}$  magic square can be transformed to any other type $\typeA{\bP'}$ magic square.

\begin{theorem} \label{thm:convert:MCPMs}
If $\bP$ and $\bP'$ are even-ordered MCPMs, then there exists a symmetric permutation matrix $\bQ$, such that $\bP' = \bQ \bP\bQ$.
\end{theorem}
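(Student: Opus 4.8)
The plan is to recast the matrices combinatorially. A symmetric permutation matrix is exactly the matrix of an involution $\sigma \in S_n$, and since $\bQ$ symmetric gives $\bQ=\bQ^{-1}$, the relation $\bP' = \bQ\bP\bQ$ is conjugation $\bP' = \bQ\bP\bQ^{-1}$, which corresponds to conjugating the underlying permutations, $\pi' = \sigma\pi\sigma^{-1}$. (The matrix-to-permutation convention of Section \ref{sect:perm:matrix} might replace $\sigma$ by $\sigma^{-1}$, but $\sigma$ is an involution so this is harmless.) An even-order MCPM is a symmetric permutation matrix of zero trace, so its permutation is a \emph{fixed-point-free} involution, i.e.\ a perfect matching of $\{1,\ldots,n\}$ into $n/2$ disjoint transpositions (this is the combinatorial content behind the count $(n-1)!!$ in Lemma \ref{lemma:pn}). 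Writing $\pi,\pi'$ for the matchings $M,M'$ of $\bP,\bP'$, the theorem reduces to: given two perfect matchings of $\{1,\ldots,n\}$, produce an involution $\sigma$ with $\sigma\pi\sigma^{-1}=\pi'$. Crucially, $\bQ$ is only required to be symmetric, not an MCPM, so $\sigma$ is allowed fixed points.

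Because $\pi$ and $\pi'$ have the same cycle type they are certainly conjugate in $S_n$; the entire content of the theorem is that the conjugator can be taken to be an involution, and this is exactly where care is needed. To arrange it I would build $\sigma$ locally on the connected components of the multigraph $M\cup M'$. Every vertex meets exactly one $M$-edge and one $M'$-edge, so each component is either a \emph{doubled edge}, a pair $\{a,b\}$ with $\{a,b\}\in M\cap M'$ (where $\pi$ and $\pi'$ already agree), or an \emph{alternating cycle} $v_1-v_2-\cdots-v_{2m}-v_1$ of even length whose edges alternate between $M$ and $M'$.

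On each component I would define the restriction of $\sigma$ and then assemble the pieces, which is legitimate since $\pi$, $\pi'$, and every piece of $\sigma$ preserve each component's vertex set. On a doubled edge I let $\sigma$ fix $a$ and $b$, so conjugation leaves the common transposition unchanged. On an alternating cycle, with $M$-edges $\{v_{2i-1},v_{2i}\}$ and $M'$-edges $\{v_{2i},v_{2i+1}\}$ (indices mod $2m$), I take $\sigma$ to be the reflection fixing the antipodal pair $v_1,v_{m+1}$ and swapping $v_{1+k}\leftrightarrow v_{1-k}$. A short index check shows this involution sends each $M$-edge $\{v_{2i-1},v_{2i}\}$ to the edge $\{v_{2-2i},v_{3-2i}\}$, which is an $M'$-edge, so $\sigma\pi\sigma^{-1}$ agrees with $\pi'$ on the cycle. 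The single subtle point is that the two alternating matchings of a $2m$-cycle are interchanged by a \emph{reflection} (an involution), whereas the obvious conjugator, the one-step \emph{rotation}, is a $2m$-cycle and hence fails to be symmetric; selecting the reflection is precisely what keeps $\bQ$ symmetric.

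Assembling the components yields a global involution $\sigma$, hence a symmetric permutation matrix $\bQ$, satisfying $\sigma\pi\sigma^{-1}=\pi'$ on all of $\{1,\ldots,n\}$, that is $\bP'=\bQ\bP\bQ$. The main obstacle throughout is the involution constraint on $\bQ$; once the statement is reduced to interchanging two perfect matchings and one notices the alternating-cycle/doubled-edge decomposition, the reflection construction disposes of it, and the remaining verification is a routine computation with the cyclic indices.
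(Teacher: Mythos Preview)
Your argument is correct. Both you and the paper take the same graph-theoretic viewpoint: an even-order MCPM is a fixed-point-free involution, hence a perfect matching on $\{1,\ldots,n\}$, and the claim becomes that any two perfect matchings are related by a vertex relabeling that is itself an involution. The paper stops at the observation that the two matching graphs are isomorphic and then simply asserts that ``because of the structure of these graphs, the relabeling is done in a symmetric way---that is, the relabeling occurs as swaps,'' without justifying why an involutory relabeling exists. Your decomposition of the union $M\cup M'$ into doubled edges and even alternating cycles, together with the choice of a \emph{reflection} (rather than a rotation) on each cycle, is exactly the missing mechanism that makes this assertion rigorous; indeed, on the paper's own order-$6$ example your construction reproduces the transposition $(2\;4)$ the paper exhibits. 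So your route is not so much a different approach as a completion of the paper's sketch, and your explicit remark that the one-step rotation would fail to be symmetric pinpoints precisely the issue the paper's proof leaves unaddressed.
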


\begin{proof}
Let the size of the matrices be $2n$ and ${\cal P}$ and ${\cal P}'$ be the graphs that correspond to the permutation matrices $\bP$ and $\bP'$.  Since $\bP$ and $\bP'$ are even-ordered MCPMs, the graphs ${\cal P}$ and ${\cal P}'$ consist of $n$ edges, each which connects a vertex of degree 1.  These two graphs are isomorphic and from Goodaire and Parmenter \cite{goodaire:2006}, there exists a permutation matrix $\bQ$ such that $\bP=\bQ\bP'\bQ^T$, where $\bQ$ is the permutation matrix that relabels the vertices of ${\cal P}$ to that of ${\cal P'}$.  Because of the structure of these graphs, the relabeling is done in a symmetric way---that is, the relabeling occurs as swaps. Thus the permutation matrix $\bQ$ is symmetric and the theorem holds.  

\end{proof}

This theorem gives further insight into the categorization and properties of magic squares. For example, since there exists a matrix that converts a type $\typeA{\bP}$ magic square to a type $\typeA{\bP'}$ magic square, this implies that there are equal numbers of magic squares of $\bP_i$-type for every MCPM, $\bP_i$.   This is because for each type $\typeA{\bP}$ magic square, there is a transformation to a type $\typeA{\bP'}$ magic square.   In the order-4 case, there are 384 matrices of each type I, II and III (of Dudeney's classification).      

Theorem \ref{thm:convert:MCPMs} also shows how to select $\bQ$ to convert from a type $\typeA{\bP}$ magic square to a type $\typeA{\bP'}$ magic square.  Consider an example where 
\begin{align}
 \bP & = 
\begin{bmatrix}
 0&1&0&0&0&0\\ 1&0&0&0
&0&0\\ 0&0&0&0&1&0\\ 0&0&0&0&0&1
\\ 0&0&1&0&0&0\\ 0&0&0&1&0&0
\end{bmatrix}, & 
\bP' &=
\begin{bmatrix}
0&0&0&1&0&0\\ 0&0&0&0
&0&1\\ 0&0&0&0&1&0\\ 1&0&0&0&0&0
\\ 0&0&1&0&0&0\\ 0&1&0&0&0&0 
\end{bmatrix}.
\label{eq:convert:example}
\end{align}

We build the following graphs ${\cal P}$ (on the left) and ${\cal P}'$ (on the right): 
\begin{multicols}{2}
\begin{center}
\begin{pspicture}(0,-0.5)(5,4)
\psdots(1,0)(3,0)(4,1.5)(3,3)(1,3)(0,1.5)
\uput[135](1,3){1}
\uput[45](3,3){2}
\uput[0](4,1.5){3}
\uput[-45](3,0){4}
\uput[-135](1,0){5}
\uput[180](0,1.5){6}
\psline(1,3)(3,3)
\psline(1,0)(4,1.5)
\psline(3,0)(0,1.5)
\end{pspicture}

\end{center} \columnbreak
\begin{center}
\begin{pspicture}(0,-0.5)(5,4)
\psdots(1,0)(3,0)(4,1.5)(3,3)(1,3)(0,1.5)
\uput[135](1,3){1}
\uput[45](3,3){2}
\uput[0](4,1.5){3}
\uput[-45](3,0){4}
\uput[-135](1,0){5}
\uput[180](0,1.5){6}
\psline(1,3)(3,0)
\psline(3,3)(0,1.5)
\psline(4,1.5)(1,0)
\end{pspicture}

\end{center}
\end{multicols}

The relabeling of the first graph as swapping vertices 2 and 4 results in the second graph.  Thus the permutation matrix 
\begin{align} \bQ & = 
\begin{bmatrix}
1 & 0 & 0 & 0 & 0 & 0 \\
0 & 0 & 0 & 1 & 0 & 0 \\
0 & 0 & 1 & 0 & 0 & 0 \\
0 & 1 & 0 & 0 & 0 & 0 \\
0 & 0 & 0 & 0 & 1 & 0 \\
0 & 0 & 0 & 0 & 0 & 1 
\end{bmatrix} \label{ex:Q:transform}
\end{align}
performs the relabeling and this matrix satisfies $\bP = \bQ\bP'\bQ$ for $\bP$ and $\bP'$ in (\ref{eq:convert:example}).  There are also 3 additional swaps and permutation matrices that will also convert $\bP$ to $\bP'$, thus $\bQ$ is not unique.

%
%
%
%
%

The matrix 
\begin{align*}
 \bA & = 
\begin{bmatrix}
 32 & 27 & 4 & 16 & 39 & 2 \\
13 & 8 & 1 & 38 & 36 & 24 \\
23 & 14 & 31 & 7 & 11 & 34 \\
5 & 19 & 37 & 25 & 22 & 12 \\
26 & 17 & 29 & 6 & 9 & 33 \\
21 & 35 & 18 & 28 & 3 & 15
\end{bmatrix}
\end{align*}
(with $\mu=120$) is type $\typeA{\bP}$, where $\bP$ is given in (\ref{eq:convert:example}).   Using Theorem \ref{thm:convert:magic}, then the matrix
\begin{align*}
 \bA' & =\bQ \bA \bQ = 
\begin{bmatrix}
 32 & 16 & 4 & 27 & 39 & 2 \\
5 & 25 & 37 & 19 & 22 & 12 \\
23 & 7 & 31 & 14 & 11 & 34 \\
13 & 38 & 1 & 8 & 36 & 24 \\
26 & 6 & 29 & 17 & 9 & 33 \\
21 & 28 & 18 & 35 & 3 & 15
\end{bmatrix}
\end{align*}
where $\bQ$ is the permutation matrix in (\ref{ex:Q:transform}), 
is a type $\typeA{\bP'}$ magic square for $\bP'$ defined in (\ref{eq:convert:example}).  

%
%
%
%

%
%
%
%

\section{Singly-Even Ordered Magic Squares}

Many of the examples in this article use either order-4 magic squares, mainly due to the fact that there are only 7040 unique matrices, and such a small number allows analysis of all magic squares of this order.   Less work has been done for order-5 magic squares, but it is known that odd-ordered magic squares have significantly different properties from those of even-ordered ones, and as we will discuss in this section singly-even (those of order $4k+2$ for $k \in \mathbb{Z}^+$) magic squares  have different properties from those of doubly-even (order $4k$ for $k \in \mathbb{Z}^+$) magic squares.  

Also, it is noted that most of the order-6 magic squares that we have shown as examples here are not natural ones---instead we have selected squares that arise from removing 10, 20 and 30 from the set $\{1,2,3,\ldots, 38,39\}$.   Examples include those in (\ref{ex:typeA:6by6}) and (\ref{ex:typeB:6by6}).   The main reason for selecting a nonnatural magic square is the main result in this section, that no natural type A magic square exists.  

The main result below relies on a proof of Planck \cite{planck:1919}, who showed that a natural singly-even pandiagonal magic squares does not exist.  A consequence of this, as he pointed out, is that there are no regular magic squares.  As noted in the next theorem, this can be extended to any type A natural magic square.  

\begin{theorem}
 There are no type A natural magic squares of singly-even order. 
\end{theorem}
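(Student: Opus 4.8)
The plan is to argue by contradiction, reducing a general type $\typeA{\bP}$ square to a regular (type $\typeA{\bJ}$) square and then invoking Planck's nonexistence result. Suppose $\bA$ is a natural magic square of singly-even order $n$ that is type $\typeA{\bP}$ for some MCPM $\bP$, so that $\bA + \bP\bA\bP = \frac{2\mu}{n}\bE$ by (\ref{eq:typeA}). Note that $\bJ$ is itself an even-order MCPM (it is symmetric with zero trace when $n$ is even), so Theorem \ref{thm:convert:MCPMs} applied to the pair $\bP$ and $\bJ$ supplies a symmetric permutation matrix $\bQ$ with $\bJ = \bQ\bP\bQ$, equivalently $\bP = \bQ\bJ\bQ$ (using $\bQ^2=\bI$, which holds since a symmetric permutation matrix is its own inverse).

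First I would conjugate the defining equation by $\bQ$. Substituting $\bP = \bQ\bJ\bQ$ into $\bA + \bP\bA\bP = \frac{2\mu}{n}\bE$ and then left- and right-multiplying by $\bQ$, using $\bQ\bQ = \bI$ and $\bQ\bE\bQ = \bE$, yields for $\bB := \bQ\bA\bQ$ the identity $\bB + \bJ\bB\bJ = \frac{2\mu}{n}\bE$; that is, $\bB$ satisfies the regular-square equation (\ref{eq:regular:perm}). Moreover $\bB$ is obtained from $\bA$ by a permutation of rows followed by a permutation of columns, so its entries are a rearrangement of those of $\bA$ and $\bB$ is again natural, and by Theorem \ref{thm:perm:semimagic} it is at least semi-magic.

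The crux is upgrading $\bB$ from semi-magic to fully magic, since Theorem \ref{thm:convert:magic} guarantees magicness only for \emph{bisymmetric} $\bQ$, whereas Theorem \ref{thm:convert:MCPMs} delivers merely a symmetric $\bQ$. Here the regular identity does the work without needing bisymmetry: because $n$ is even, the pairing $i \leftrightarrow n+1-i$ has no fixed point, so the $n$ main-diagonal entries of $\bB$ split into $n/2$ pairs, each summing to $2\mu/n$ by the regular relation, whence $\tr(\bB)=\mu$; the same involution carries the anti-diagonal to itself and gives $\tr(\bJ\bB)=\mu$. Thus (\ref{def:magic:square:diag}) and (\ref{def:magic:square:altdiag}) hold, and combined with the semi-magic row and column sums, $\bB$ is a genuine magic square---in fact a natural, singly-even, regular magic square.

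This contradicts the consequence of Planck's theorem \cite{planck:1919} that no natural singly-even regular magic square exists, which completes the proof. I expect the main obstacle to be precisely this semi-magic-to-magic step: one must recognize that symmetry (rather than bisymmetry) of $\bQ$ suffices once the regular identity $\bB + \bJ\bB\bJ = \frac{2\mu}{n}\bE$ is in hand, and confirm that the diagonal bookkeeping relies only on $n$ being even so that no diagonal entry is self-paired.
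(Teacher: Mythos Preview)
Your argument follows the paper's strategy exactly: use Theorem~\ref{thm:convert:MCPMs} to conjugate $\bA$ into a regular square and then contradict Planck. The paper's own proof simply invokes Theorem~\ref{thm:convert:magic} at that point to conclude $\bQ\bA\bQ$ is a type $\typeA{\bJ}$ magic square.

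You have, however, spotted and repaired a genuine gap that the paper glosses over. Theorem~\ref{thm:convert:magic} requires $\bQ$ to be \emph{bisymmetric}, but Theorem~\ref{thm:convert:MCPMs} only produces a \emph{symmetric} $\bQ$, so the paper's appeal to Theorem~\ref{thm:convert:magic} is not literally justified. Your direct verification of the diagonal sums---pairing $b_{i,i}$ with $b_{n+1-i,n+1-i}$ and $b_{i,n+1-i}$ with $b_{n+1-i,i}$ via the regular identity, using that $n$ even forces these pairs to be disjoint---is exactly the missing step and is correct. So your proof is essentially the paper's, but more rigorous on this point.
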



\begin{proof}
 Assume that $\bA$ is a type $\typeA{\bP}$ natural magic square of singly-even order.  From Theorem \ref{thm:convert:MCPMs} there is a matrix $\bQ$ such that $\bP = \bQ \bJ \bQ$. Then from Theorem \ref{thm:convert:magic}, $\bQ \bA \bQ$ is a type $\typeA{\bJ}$ (or regular) magic square, which is a contradiction, since Planck \cite{planck:1919} proved that no such natural regular magic square exists.   Therefore no type A natural magic square exists.   
 
\end{proof}

\section{Determinants, Eigenvalues and Eigenvectors of Even-Ordered Magic Squares}

We now explore determinants, eigenvalues and eigenvectors of type A and B magic squares.  In particular we will show that magic squares of these types are singular.  

\subsection{Type A Magic Squares}

In this section, we investigate type A magic squares, defined above.  The theorems presented here are similar to those in Mattingly \cite{mattingly:2000} but are generalized from a regular magic square to any type A magic.  First, we define, 
\begin{align}
\bZ = \bA - \frac{\mu}{n} \bE. \label{def:Z}
\end{align}
The matrix $\bZ$ is used in the proofs of theorems below, however is also useful for exploring the symmetry of a magic square.  For example, let
\begin{align}
\bA = 
\begin{bmatrix}
20&42&30&41&33&52&3&39\\
23&45&26&13&62&24&32&35\\
16&7&19&50&56&40&60&12\\
22&8&47&28&48&10&59&38\\
63&64&4&14&29&31&44&11\\
57&43&27&55&6&37&17&18\\
1&2&54&34&21&51&36&61\\
58&49&53&25&5&15&9&46\\
\end{bmatrix} \label{ex:typeA:8by8}
\end{align}
which is a type A magic square.  It satisfies $\bA + \bP\bA\bP = (2\mu/n) \bE$ where
\begin{align} 
\bP = 
\begin{bmatrix}
0&1&0&0&0&0&0&0\\
1&0&0&0&0&0&0&0\\
0&0&0&0&0&0&0&1\\
0&0&0&0&0&1&0&0\\
0&0&0&0&0&0&1&0\\
0&0&0&1&0&0&0&0\\
0&0&0&0&1&0&0&0\\
0&0&1&0&0&0&0&0\\
\end{bmatrix},
\label{eq:ex:perm:order8}
\end{align}
and $\mu=260$.  This permutation matrix can be constructed in a manner similar to that discussed in Lemma \ref{lemma:pn} and this is one of 7!! or 105 order-8 MCPMs.  

%
%


Using (\ref{def:Z}) and (\ref{ex:typeA:8by8}), this example results in 
\begin{align*}
	\bZ & = \left[\begin{array}{rrrrrrrr}
- 12.5& 9.5&- 2.5& 8.5&0.5& 19.5&- 29.5&6.5\\
 - 9.5&12.5&- 6.5&- 19.5& 29.5&- 8.5&- 0.5& 2.5\\ 
 - 16.5&- 25.5&- 13.5& 17.5& 23.5& 7.5& 27.5&-20.5\\
 - 10.5&-24.5& 14.5&- 4.5&15.5&- 22.5& 26.5&5.5\\
 30.5& 31.5&- 28.5&- 18.5&- 3.5&- 1.5& 11.5&-21.5\\
 24.5&10.5&- 5.5& 22.5&-26.5& 4.5&- 15.5&-14.5\\
 -31.5&-30.5& 21.5& 1.5&-11.5& 18.5& 3.5& 28.5\\ 
 25.5&16.5& 20.5&- 7.5&-27.5&- 17.5&- 23.5&13.5\\
\end{array}\right].
\end{align*}
The matrix $\bZ$ can be used to develop a Dudeney-like diagram for the matrix $\bP$ in (\ref{eq:ex:perm:order8}), which is fascinating in that there is no matrix $\bP$ in the definition of $\bZ$.  To create such a diagram, one draws links between numbers of the same magnitude and opposite signs---the diagram is found in Figure \ref{fig:dudeney:order8}.

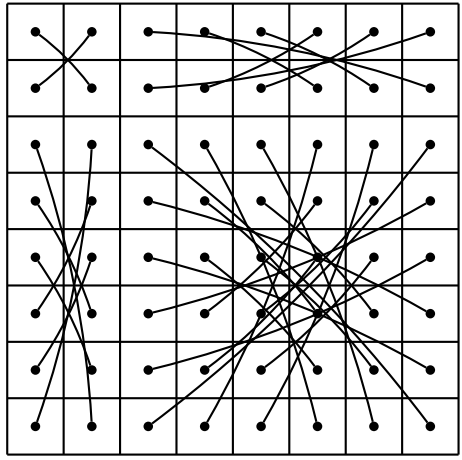
\begin{figure} 
\begin{center}
\psset{arcangle=8,unit=0.75} 
\begin{pspicture}(0,0)(8,8)
\multido{\i=0+1}{9}{\psline(0,\i)(8,\i)}
\multido{\i=0+1}{9}{\psline(\i,0)(\i,8)}

\dotnode(0.5,7.5){A11} \dotnode(1.5,7.5){A12}
\dotnode(2.5,7.5){A13} \dotnode(3.5,7.5){A14}
\dotnode(4.5,7.5){A15} \dotnode(5.5,7.5){A16}
\dotnode(6.5,7.5){A17} \dotnode(7.5,7.5){A18}

\dotnode(0.5,6.5){A21} \dotnode(1.5,6.5){A22}
\dotnode(2.5,6.5){A23} \dotnode(3.5,6.5){A24}
\dotnode(4.5,6.5){A25} \dotnode(5.5,6.5){A26}
\dotnode(6.5,6.5){A27} \dotnode(7.5,6.5){A28}

\dotnode(0.5,5.5){A31} \dotnode(1.5,5.5){A32}
\dotnode(2.5,5.5){A33} \dotnode(3.5,5.5){A34}
\dotnode(4.5,5.5){A35} \dotnode(5.5,5.5){A36}
\dotnode(6.5,5.5){A37} \dotnode(7.5,5.5){A38}

\dotnode(0.5,4.5){A41} \dotnode(1.5,4.5){A42}
\dotnode(2.5,4.5){A43} \dotnode(3.5,4.5){A44}
\dotnode(4.5,4.5){A45} \dotnode(5.5,4.5){A46}
\dotnode(6.5,4.5){A47} \dotnode(7.5,4.5){A48}

\dotnode(0.5,3.5){A51} \dotnode(1.5,3.5){A52}
\dotnode(2.5,3.5){A53} \dotnode(3.5,3.5){A54}
\dotnode(4.5,3.5){A55} \dotnode(5.5,3.5){A56}
\dotnode(6.5,3.5){A57} \dotnode(7.5,3.5){A58}

\dotnode(0.5,2.5){A61} \dotnode(1.5,2.5){A62}
\dotnode(2.5,2.5){A63} \dotnode(3.5,2.5){A64}
\dotnode(4.5,2.5){A65} \dotnode(5.5,2.5){A66}
\dotnode(6.5,2.5){A67} \dotnode(7.5,2.5){A68}

\dotnode(0.5,1.5){A71} \dotnode(1.5,1.5){A72}
\dotnode(2.5,1.5){A73} \dotnode(3.5,1.5){A74}
\dotnode(4.5,1.5){A75} \dotnode(5.5,1.5){A76}
\dotnode(6.5,1.5){A77} \dotnode(7.5,1.5){A78}

\dotnode(0.5,0.5){A81} \dotnode(1.5,0.5){A82}
\dotnode(2.5,0.5){A83} \dotnode(3.5,0.5){A84}
\dotnode(4.5,0.5){A85} \dotnode(5.5,0.5){A86}
\dotnode(6.5,0.5){A87} \dotnode(7.5,0.5){A88}

\ncarc{A11}{A22} \ncarc{A12}{A21}
\ncarc{A13}{A28} \ncarc{A14}{A26}
\ncarc{A15}{A27} \ncarc{A16}{A24}
\ncarc{A17}{A25} \ncarc{A18}{A23}

\ncarc{A31}{A82} \ncarc{A32}{A81}
\ncarc{A33}{A88} \ncarc{A34}{A86}
\ncarc{A35}{A87} \ncarc{A36}{A84}
\ncarc{A37}{A85} \ncarc{A38}{A83}

\ncarc{A41}{A62} \ncarc{A42}{A61}
\ncarc{A43}{A68} \ncarc{A44}{A66}
\ncarc{A45}{A67} \ncarc{A46}{A64}
\ncarc{A47}{A65} \ncarc{A48}{A63}

\ncarc{A51}{A72} \ncarc{A52}{A71}
\ncarc{A53}{A78} \ncarc{A54}{A76}
\ncarc{A55}{A77} \ncarc{A56}{A74}
\ncarc{A57}{A75} \ncarc{A58}{A73}

\end{pspicture}
\end{center}

\caption{A Dudeney diagram for a magic square of type $\typeA{\bP}$, where $\bP$ is found in (\ref{eq:ex:perm:order8}).  The magic square $\bA$ in (\ref{ex:typeA:8by8}) has a symmetry in which the entries connected by the arcs in the diagram add to $2\mu/n=65$ in this example.} 
\label{fig:dudeney:order8}
\end{figure}

This next theorem shows the eigenvalues of $\bZ$ are related to those of $\bA$.  

\begin{theorem} \label{lem:same:eigenvalues}
Let $\bA$ be a type A even-ordered magic square and let $\bZ$ be the matrix defined in (\ref{def:Z}).  Then $\bA$ and $\bZ$ have the same eigenvalues, except that $\mu$ is replaced by 0 in the spectrum of $\bZ$.   Also, both $\bZ$ and $\bA$ have the same eigenvectors.  
\end{theorem}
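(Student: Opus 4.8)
The plan is to exploit the two one-sided eigenvector facts from Lemma \ref{lem:magic:eigenvalue}, namely $\bA\be = \mu\be$ and $\be^T\bA = \mu\be^T$, together with the rank-one structure $\bE = \be\be^T$. First I would record the two actions of $\bE$ that drive everything: $\bE\be = \be(\be^T\be) = n\be$, and $\bE v = \be(\be^T v)$ for any vector $v$, so $\bE$ annihilates the hyperplane $\be^\perp = \{v : \be^T v = 0\}$.

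The heart of the argument is a decomposition $\mathbb{R}^n = \text{span}(\be) \oplus \be^\perp$ that both $\bA$ and $\bZ$ respect. The line $\text{span}(\be)$ is $\bA$-invariant because $\be$ is a right-eigenvector. The hyperplane $\be^\perp$ is $\bA$-invariant because $\be^T$ is a left-eigenvector: if $\be^T v = 0$, then $\be^T(\bA v) = (\be^T\bA)v = \mu\,\be^T v = 0$, so $\bA v \in \be^\perp$. Hence, in a basis adapted to this splitting, $\bA$ acts as multiplication by $\mu$ on $\text{span}(\be)$ and as some operator $M := \bA|_{\be^\perp}$ on $\be^\perp$.

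I would then show $\bZ$ has the identical block structure with the single $\mu$ replaced by $0$. On the line, $\bZ\be = \bA\be - \frac{\mu}{n}\bE\be = \mu\be - \frac{\mu}{n}(n\be) = 0$, so $\be$ is an eigenvector of $\bZ$ with eigenvalue $0$. On the hyperplane, for $v \in \be^\perp$ we have $\bE v = \be(\be^T v) = 0$, so $\bZ v = \bA v - \frac{\mu}{n}\bE v = \bA v$; that is, $\bZ|_{\be^\perp} = \bA|_{\be^\perp} = M$. Relative to the same splitting, $\bA$ acts as $\mu$ on $\text{span}(\be)$ and as $M$ on $\be^\perp$, while $\bZ$ acts as $0$ on $\text{span}(\be)$ and as the same $M$ on $\be^\perp$. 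This immediately yields that the two spectra coincide apart from the $\mu$-block of $\bA$ becoming the $0$-block of $\bZ$, and that every eigenvector of $M$ (living in $\be^\perp$) together with $\be$ is a common eigenvector of both matrices.

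The step I expect to require the most care is phrasing the conclusion precisely when $\mu$ is a repeated eigenvalue of $\bA$: if $\mu$ also occurs in the spectrum of $M$, it persists in the spectrum of $\bZ$, so ``$\mu$ replaced by $0$'' must be read as removing exactly the one copy of $\mu$ coming from the $\text{span}(\be)$ block. The invariant-subspace formulation handles this cleanly and, crucially, sidesteps any appeal to orthogonality of eigenvectors, which would be illegitimate here since $\bA$ need not be symmetric. It is worth noting that the argument uses only the semi-magic property $\bA\be = \mu\be$ and $\be^T\bA = \mu\be^T$; the type-A and even-order hypotheses are not actually needed for this particular statement.
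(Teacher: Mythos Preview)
Your argument is correct. Note that the paper does not actually prove this theorem; it simply cites Mattingly \cite{mattingly:2000} and remarks that his proof for regular magic squares extends with minor changes. Your invariant-subspace decomposition $\mathbb{R}^n = \operatorname{span}(\be) \oplus \be^{\perp}$, driven by the two-sided eigenvector property of $\be$ from Lemma~\ref{lem:magic:eigenvalue}, is the standard route for handling a rank-one perturbation such as $\bZ = \bA - \tfrac{\mu}{n}\be\be^T$ and is essentially Mattingly's approach. Your closing observation is also correct and worth retaining: only the semi-magic conditions $\bA\be = \mu\be$ and $\be^T\bA = \mu\be^T$ are used, so the type-A and even-order hypotheses in the statement play no role in this particular result.
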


The proof of this theorem can be found in Mattingly \cite{mattingly:2000}.  His proof applied only to regular magic squares, however with minor changes, this extends to any type A magic square.  

\begin{theorem} \label{thm:eigen:plusminus}
Let $\bA$ be a type $\typeA{\bP}$ magic square.  If $\lambda \neq \mu$ is a eigenvalue of $\bA$ associated  with eigenvector $\bx$, then $-\lambda$ is also an eigenvalue with associated eigenvector $\bP \bx$.  
\end{theorem}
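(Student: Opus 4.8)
The plan is to start from the defining relation of a type $\typeA{\bP}$ magic square, namely $\bA + \bP\bA\bP = \frac{2\mu}{n}\bE$, and test it against the vector $\bP\bx$. Right-multiplying this relation by $\bP\bx$ and exploiting that $\bP$ is a symmetric permutation matrix (so that $\bP\bP = \bP^T\bP = \bI$) should collapse the middle term $\bP\bA\bP\bP\bx$ into $\bP\bA\bx$, which by the eigenvalue hypothesis $\bA\bx = \lambda\bx$ equals $\lambda\bP\bx$. The outcome will be an identity of the shape $\bA\bP\bx + \lambda\bP\bx = \frac{2\mu}{n}\bE\bP\bx$, so that the entire argument reduces to controlling the all-ones term on the right.

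Next I would simplify $\bE\bP\bx$ using the factorization $\bE = \be\be^T$. Since $\bP$ merely permutes coordinates and the all-ones vector is fixed by any permutation, we have $\be^T\bP = \be^T$, whence $\bE\bP\bx = \be(\be^T\bP)\bx = \be(\be^T\bx) = \bE\bx$. This transfers the problem to showing that $\be^T\bx = 0$, which is exactly where the hypothesis $\lambda \neq \mu$ must be used.

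The key step, then, is to establish $\be^T\bx = 0$. For this I would invoke Lemma \ref{lem:magic:eigenvalue}, which guarantees that $\be^T$ is a left eigenvector of $\bA$ with eigenvalue $\mu$. Writing $\be^T(\bA\bx)$ two ways gives $\mu(\be^T\bx) = \be^T\bA\bx = \lambda(\be^T\bx)$, so $(\lambda-\mu)(\be^T\bx) = 0$; because $\lambda \neq \mu$ this forces $\be^T\bx = 0$, and therefore $\bE\bx = \be(\be^T\bx) = \mathbf{0}$.

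With the right-hand side annihilated, the identity becomes $\bA\bP\bx = -\lambda\bP\bx$, and since $\bP$ is invertible and $\bx \neq \mathbf{0}$, the vector $\bP\bx$ is nonzero, making it a genuine eigenvector of $\bA$ for the eigenvalue $-\lambda$. I expect the only real subtlety — indeed the crux of the proof — to be the recognition that the correction term $\frac{2\mu}{n}\bE$ vanishes against $\bx$ precisely because $\bx$ is orthogonal to $\be$, and that this orthogonality is supplied exactly by the exclusion $\lambda \neq \mu$; everything else is the bookkeeping of $\bP^2 = \bI$ and $\be^T\bP = \be^T$.
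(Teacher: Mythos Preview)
Your proof is correct, but it takes a somewhat different route from the paper's. The paper introduces the auxiliary matrix $\bZ = \bA - \frac{\mu}{n}\bE$, shows the clean similarity relation $\bP\bZ\bP = -\bZ$, deduces that $\bZ$ and $-\bZ$ share eigenvalues, and then invokes Theorem~\ref{lem:same:eigenvalues} to transfer the eigenpair conclusions from $\bZ$ back to $\bA$. You instead work directly with $\bA$ and handle the $\frac{2\mu}{n}\bE$ correction term explicitly, killing it via the orthogonality $\be^T\bx = 0$ obtained from the left-eigenvector identity of Lemma~\ref{lem:magic:eigenvalue}. Your argument is more self-contained and does not need the intermediate Theorem~\ref{lem:same:eigenvalues}; the paper's detour through $\bZ$ pays off later, though, since the similarity $\bP\bZ\bP = -\bZ$ is reused in the Jordan-block argument for Theorem~\ref{thm:Jordan:blocks} and the singularity result. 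Substantively, the two proofs hinge on the same algebra: both ultimately use $\bP^2 = \bI$ and the fact that the rank-one term $\bE$ acts trivially on the non-magic eigenspaces.
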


 \begin{proof} 

Let $\bZ$ be the matrix defined in (\ref{def:Z}).  First, we will show that $\bP \bZ \bP=-\bZ$. Left- and right-multiplication of  (\ref{def:Z}) by $\bP$ results in, 
\begin{align}
 \bP \bZ \bP &  = \bP \bA \bP - \frac{\mu}{n} \bP \bE \bP, \label{eq:PZP}
 \intertext{and using (\ref{eq:typeA}) as well as $\bP \bE \bP=\bE$.  Adding (\ref{def:Z}) and (\ref{eq:PZP}) and using $\bP\bE\bP=\bE$ results in } 
\bZ + \bP \bZ \bP & = \bA - \frac{\mu}{n} \bE  + \bP\bA\bP  -\frac{\mu}{n} \bE.  \label{eq:PZP2} 
\end{align}
Using (\ref{eq:typeA}), the right hand side of (\ref{eq:PZP2}) is 0, so $\bP\bZ\bP=-\bZ$.  
Since $\bP = \bP^{-1}$, $\bZ$ is similar to $-\bZ$, thus $\bZ$ and $-\bZ$ have the same eigenvalues.    A consequence of this is that if $\lambda \neq 0$ is an eigenvalue of $\bZ$, then $-\lambda$ is also an eigenvalue of $\bZ$.  
  
Next, assume that $\bx$ is an eigenvector of $\bZ$ with eigenvalue $\lambda$.  Multiply $\bP\bZ\bP=-\bZ$ on the right by $\bx$ to get 
 \begin{align*}
 \bP\bZ\bP\bx &= -\bZ\bx = -\lambda \bx\intertext{and multiplying this  through by $\bP^{-1}=\bP$, the result is}
 \bZ\bP\bx&=\bP^{-1}(-\lambda \bx) = -\lambda \bP \bx,
\end{align*}
so $-\lambda$ is an eigenvalue of $\bZ$ with related eigenvector $\bP\bx$.  Lastly since $\bZ$ and $\bA$ have the same eigenvalues (except for $\mu$) and eigenvectors as seen in Theorem \ref{lem:same:eigenvalues}, then the statement of the theorem is true.  
\end{proof}

In light of Theorems \ref{lem:same:eigenvalues} and \ref{thm:eigen:plusminus}, we examine $\bA$ in (\ref{ex:typeA:8by8}) in more detail.  The eigenvalues\footnote{The eigevalues in this example and later related eigenvectors are calculated numerically.  The results presented are rounded to either 3 or 4 significant digits to illustrate the theorems.} are
\begin{align*}
\{260,0,- 61.80,61.80,40.17\,i,- 40.17\,i,11.39\,i,- 11.39\,i\} 
\end{align*}
and interesting properties include:
\begin{itemize}
	\item The number 260 is the magic eigenvalue as is stated in Lemma \ref{lem:magic:eigenvalue}
	\item The matrix is singular since 0 is an eigenvalue.  We will prove this about all type A magic squares below.  
	\item Each of the other eigenvalues arise as pairs of equal magnitude and opposite sign.  This is a consequence of Theorem \ref{thm:eigen:plusminus}.  
\end{itemize}




If we extend the example shown above to include eigenvectors, consider the eigenvector\footnote{The eigenvectors shown here are found numerically as was the eigenvalues and are selected such that the 2-norm equals 1 with the same signs.}  associated with $\lambda_4 = 61.80$ which is 
\begin{align*}
\bx_4 = (0.230, 0.717,-0.355,-0.342,0.237,-0.020, -0.344,-0.123)^T
\end{align*}
and the eigenvector associated with $\lambda_3=-\lambda_4=-61.80$ is
\begin{align*}
 \bx_3 &=   	
(0.717,0.230,-0.123,-0.020,-0.344,-0.342,0.237,-0.355)^T 
\end{align*}
and it can be seen that $\bx_3=\bP \bx_4$ with $\bP$ defined in (\ref{eq:ex:perm:order8}), recalling that  left-multiplying by $\bP$ interchanges pairs of elements in a vector.  The remaining pairs of eigenvalues and eigenvectors are related in a similar manner.

\begin{theorem} \label{thm:Jordan:blocks}
	Suppose that a square matrix $\bB$ is similar to $-\bB$.  If a Jordan Block $\bJ_{{k}} \left( \lambda \right)$ appears $m$ times in the Jordan Canonical Form of $\bB$ then so does $\bJ_{{k}} \left(-\lambda \right)$.
\end{theorem}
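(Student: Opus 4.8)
The plan is to reduce the statement to a single structural fact about how the Jordan Canonical Form transforms under negation, and then to invoke the hypothesis that $\bB$ and $-\bB$ are similar. Since everything will hinge on counting blocks, I would first note explicitly that the multiset of Jordan blocks is a similarity invariant (the Jordan form is unique up to the order of the blocks), so the multiplicities $m(k,\lambda)$ are well defined.

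First I would record the elementary observation that, for any $k$ and any scalar $\lambda$, the matrix $-\bJ_k(\lambda)$ is similar to $\bJ_k(-\lambda)$. Writing $\bJ_k(\lambda) = \lambda \bI + \bN$, where $\bN$ is the $k \times k$ nilpotent block with ones on the superdiagonal, one has $-\bJ_k(\lambda) = -\lambda \bI - \bN$. Conjugating by the diagonal matrix $\bD = \mathrm{diag}(1,-1,1,\ldots,(-1)^{k-1})$ sends $\bN$ to $-\bN$, since the surviving superdiagonal entries pick up the ratio $d_i/d_{i+1} = -1$; hence $\bD(-\lambda\bI - \bN)\bD^{-1} = -\lambda \bI + \bN = \bJ_k(-\lambda)$. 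This is the one computation that does any real work, and it is short.

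Next I would use this to describe the whole Jordan form of $-\bB$. If the Jordan blocks of $\bB$ are $\bJ_{k_1}(\lambda_1), \ldots, \bJ_{k_r}(\lambda_r)$, then $-\bB$ is similar to the block-diagonal matrix whose blocks are $-\bJ_{k_1}(\lambda_1), \ldots, -\bJ_{k_r}(\lambda_r)$, and by the previous step (assembling the individual matrices $\bD$ into one block-diagonal similarity) this is in turn similar to the block-diagonal matrix with blocks $\bJ_{k_1}(-\lambda_1), \ldots, \bJ_{k_r}(-\lambda_r)$. Hence the Jordan form of $-\bB$ is obtained from that of $\bB$ by replacing every block $\bJ_k(\lambda)$ with $\bJ_k(-\lambda)$. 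Writing $m_{\bB}(k,\lambda)$ for the number of times $\bJ_k(\lambda)$ occurs in the Jordan form of $\bB$, this says $m_{-\bB}(k,\lambda) = m_{\bB}(k,-\lambda)$ for all $k$ and $\lambda$.

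Finally I would invoke the hypothesis. Because similar matrices share the same Jordan form, $\bB \sim -\bB$ gives $m_{\bB}(k,\lambda) = m_{-\bB}(k,\lambda)$, and combining this with the identity from the previous step yields $m_{\bB}(k,\lambda) = m_{\bB}(k,-\lambda)$. Taking $\lambda$ to be an eigenvalue for which $\bJ_k(\lambda)$ appears $m$ times then forces $\bJ_k(-\lambda)$ to appear exactly $m$ times as well, which is the claim. The main (indeed only) obstacle is the conjugation identity $-\bJ_k(\lambda) \sim \bJ_k(-\lambda)$; once that is established, the remainder is bookkeeping with the uniqueness of the Jordan Canonical Form.
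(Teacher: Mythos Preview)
Your argument is correct and is essentially the same as the paper's, only far more detailed: the paper does not give a self-contained proof but simply cites Mattingly and notes that the result follows because $\bB$ and $-\bB$ have the same Jordan form. Your explicit conjugation $\bD(-\lambda\bI-\bN)\bD^{-1}=\bJ_k(-\lambda)$ is exactly the computation that justifies this one-line remark, and the rest is the bookkeeping the paper leaves implicit.
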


Again, this theorem comes from Mattingly \cite{mattingly:2000} and is true because both $\bB$ and $-\bB$ have the same Jordan Form.  The following now extends Mattingly's proof that regular magic squares are singular to any type A magic square.

\begin{theorem}
If $\bA$ is a type A magic square of even-order, then $\det(\bA)=0$. 
\end{theorem}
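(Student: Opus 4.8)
The plan is to show that the matrix $\bZ = \bA - \frac{\mu}{n}\bE$ has $0$ as an eigenvalue of algebraic multiplicity at least two, and then to translate this back to $\bA$ via Theorem \ref{lem:same:eigenvalues}. The parity of the order $n$ will be the crucial ingredient.

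First I would recall from the proof of Theorem \ref{thm:eigen:plusminus} that $\bP\bZ\bP = -\bZ$, and since $\bP = \bP^{-1}$ this means $\bZ$ is similar to $-\bZ$. Invoking Theorem \ref{thm:Jordan:blocks} with $\bB = \bZ$, every Jordan block $\bJ_k(\lambda)$ in the Jordan form of $\bZ$ is accompanied by an equal number of blocks $\bJ_k(-\lambda)$. For a nonzero eigenvalue $\lambda$, the values $\lambda$ and $-\lambda$ are distinct, so these blocks pair off and contribute an even total dimension to the Jordan form. Hence the combined size of all Jordan blocks belonging to nonzero eigenvalues is even.

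Next, since $\bA$ has even order $n$ and the blocks for nonzero eigenvalues account for an even number of dimensions, the total dimension of the blocks with eigenvalue $0$ — that is, the algebraic multiplicity of $0$ in $\bZ$ — must also be even. On the other hand, Lemma \ref{lem:magic:eigenvalue} shows $\mu$ is an eigenvalue of $\bA$, and Theorem \ref{lem:same:eigenvalues} tells us this contributes a $0$ to the spectrum of $\bZ$, so that multiplicity is at least $1$. Being both even and positive, it is at least $2$.

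Finally I would transfer this conclusion back to $\bA$. By Theorem \ref{lem:same:eigenvalues}, the spectrum of $\bA$ is obtained from that of $\bZ$ by replacing a single $0$ with $\mu$. Since $0$ occurs with multiplicity at least two in $\bZ$, it still occurs (with multiplicity at least one) in $\bA$, so $0$ is an eigenvalue of $\bA$ and therefore $\det(\bA) = 0$. The one step requiring care is the parity bookkeeping of the Jordan block sizes: I must make sure the pairing from Theorem \ref{thm:Jordan:blocks} genuinely forces an even count only for the nonzero eigenvalues (where $\lambda \neq -\lambda$), and that this, together with $n$ even, pins the multiplicity of $0$ to be even rather than merely nonzero. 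This is precisely the observation that upgrades ``at least one'' to ``at least two'' and thereby keeps a zero eigenvalue alive in $\bA$ after the $\mu$-swap.
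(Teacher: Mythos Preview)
Your proposal is correct and follows essentially the same approach as the paper's proof: pair the Jordan blocks of nonzero eigenvalues of $\bZ$ via Theorem~\ref{thm:Jordan:blocks}, use the evenness of $n$ to force the algebraic multiplicity of $0$ in $\bZ$ to be even (hence at least two), and then invoke Theorem~\ref{lem:same:eigenvalues} to conclude that $0$ survives as an eigenvalue of $\bA$. If anything, you are more explicit than the paper about where the hypothesis that $n$ is even enters and about why ``at least one'' becomes ``at least two''.
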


\begin{proof}
 Let $\bZ$ be defined in (\ref{def:Z}).   For each nonzero eigenvalue $\lambda$ of $\bZ$, Theorem \ref{thm:Jordan:blocks}  ensures that the Jordan blocks of $\lambda$ are paired with those of $-\lambda$.
  
Then the sum of the algebraic multiplicities of the nonzero eigenvalues of $\bZ$ must even. Note that 0 is eigenvalue of $\bZ$, so itÕs multiplicity must also be even.  If $\bZ$ has at least two eigenvalues that are 0, this implies at least one eigenvalue of $\bA$ is 0. Thus, $\det(\bA)=0$. 
\end{proof}

\subsection{Type B Magic Squares}

\begin{theorem}
	If $\bA$ is an order-$2n$ magic square that satisfies either (\ref{def:typeBi}) or (\ref{def:typeBii}) then $\bA$ is singular.  Furthermore, the rank of $\bA$ is at most $n+1$.  
\end{theorem}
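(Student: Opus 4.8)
The plan is to factor the defining relation and reduce the whole statement to a single rank computation. Consider first case (\ref{def:typeBi}), where $\bA + \bP\bA = \frac{2\mu}{n}\bE$ for some MCPM $\bP$. I would rewrite this as $(\bI + \bP)\bA = \frac{2\mu}{n}\bE$ and note that the right-hand side is a scalar multiple of $\bE = \be\be^T$, hence has rank at most $1$. Case (\ref{def:typeBii}), namely $\bA(\bI + \bP) = \frac{2\mu}{n}\bE$, is handled identically after transposing (using $\bE^T = \bE$ and $\bP^T = \bP$), or by running the same argument on the columns rather than the rows; so it suffices to treat one case.

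Next I would pin down $\operatorname{rank}(\bI + \bP)$. Since $\bP$ is an MCPM of the even order $2n$, it is symmetric with $\tr(\bP) = 0$. A symmetric permutation matrix satisfies $\bP^T = \bP = \bP^{-1}$, so $\bP^2 = \bI$ and $\bP$ is an involution, while $\tr(\bP) = 0$ says it has no fixed points. Hence $\bP$ is a product of exactly $n$ disjoint transpositions of the $2n$ indices. Its eigenvalues are therefore $+1$ with multiplicity $n$ (the vectors $\be_i + \be_j$ on each pair) and $-1$ with multiplicity $n$ (the vectors $\be_i - \be_j$), so $\bI + \bP$ has eigenvalue $2$ with multiplicity $n$ and eigenvalue $0$ with multiplicity $n$, giving $\operatorname{rank}(\bI + \bP) = n$.

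From here there are two equivalent finishes. The quick one applies Sylvester's rank inequality to the product $(\bI+\bP)\bA$: since both factors are $2n \times 2n$,
$$\operatorname{rank}\!\big((\bI+\bP)\bA\big) \ge \operatorname{rank}(\bI+\bP) + \operatorname{rank}(\bA) - 2n = n + \operatorname{rank}(\bA) - 2n,$$
while the left-hand side equals $\operatorname{rank}(\tfrac{2\mu}{n}\bE) \le 1$; combining yields $\operatorname{rank}(\bA) \le n+1$. The more hands-on finish reads the relation row by row: writing the transposition pairs of $\bP$ as $(i_k,j_k)$ for $k=1,\dots,n$, the equation says $\operatorname{row}_{i_k}(\bA) + \operatorname{row}_{j_k}(\bA) = \frac{2\mu}{n}\be^T$ for each pair. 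Thus every row $\operatorname{row}_{i_k}(\bA)$ lies in the span of $\be^T$ and its partner $\operatorname{row}_{j_k}(\bA)$, so the entire row space of $\bA$ is spanned by the $n+1$ vectors $\be^T, \operatorname{row}_{j_1}(\bA), \dots, \operatorname{row}_{j_n}(\bA)$, again giving $\operatorname{rank}(\bA) \le n+1$. Singularity is then immediate, since $n+1 < 2n$ whenever $n \ge 2$.

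The main obstacle, and really the only nonroutine point, is the rank count for $\bI + \bP$, which rests on the combinatorial observation that an even-order MCPM is a fixed-point-free involution and hence a product of $n$ transpositions; once that structure is in hand, both the Sylvester argument and the explicit row argument are short. I would also check the boundary order $2n = 2$ (i.e. $n=1$) separately, where the rank bound $n+1 = 2$ is vacuous, but for every order $2n \ge 4$ the argument above delivers both conclusions at once.
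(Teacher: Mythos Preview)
Your proposal is correct. Your ``hands-on'' finish---pairing row $i_k$ with row $j_k$ to get $n$ copies of the constant row, hence a row space of dimension at most $n+1$---is exactly the paper's argument, carried out there via the row operations $R_i + R_{P_i} \to R_{P_i}$.

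The Sylvester route you lead with is a genuinely different packaging that the paper does not use. By factoring the relation as $(\bI+\bP)\bA = \tfrac{2\mu}{n}\bE$ and computing $\operatorname{rank}(\bI+\bP)=n$ from the eigenstructure of the fixed-point-free involution $\bP$, you replace the explicit row manipulation by a single inequality. This is cleaner and makes the role of the MCPM hypothesis (symmetric, trace zero) more transparent: it is precisely what forces the $+1$ and $-1$ eigenspaces of $\bP$ to be $n$-dimensional each. The paper's version has the advantage of being entirely elementary (no Sylvester, no spectral computation), and it displays the $n$ identical rows concretely, which feeds directly into the worked $8\times 8$ example that follows. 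Your remark about the boundary order $2n=2$ is a fair caveat; the paper does not address it either.
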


\begin{proof}
We first consider a matrix $\bA$ that satisfies (\ref{def:typeBi}) and let $P_i$ be the column where the one appears in the $i$th row of the matrix $\bP$ in (\ref{def:typeBi}). There are $n$ pairs $(i,P_i)$ with $i<P_i$.  Thus the $n$ row operations $R_i + R_{P_i} \rightarrow R_{P_i}$ results in $n$ rows contain the row vector $(2\mu/n) \be^T$.  

Since there are $n$ identical rows, the determinant is 0.  Also row operations can be used to get $n-1$ rows of zeros, thus the dimension of the null space is at least $n-1$ and thus the rank of the matrix can be no larger than $n+1$.   

If $\bA$ satisfies (\ref{def:typeBii}) then the same argument can be made using column operations to generate $n$ columns containing the vector $(\mu/n) \be$.  Similarly the rank of these matrices is also at most $n+1$.  

\end{proof}

We demonstrate this result with a natural 8 by 8 magic square.  The following matrix 

\begin{align}
\begin{bmatrix}
14&39&63&16&33&24&40&31\\
56&47&42&19&15&60&1&20\\
44&54&10&7&53&22&8&62\\
59&17&4&35&52&37&29&27\\
6&48&61&30&13&28&36&38\\
21&11&55&58&12&43&57&3\\
9&18&23&46&50&5&64&45\\
51&26&2&49&32&41&25&34\\ 
\end{bmatrix},
\label{eq:8by8:typeB}
\end{align}
is a type B magic---that satisfies (\ref{def:typeBi})---with $\bP=\bJ$.  The magic number in this case is $\mu=260$.  Applying the row operations  $R_1 + R_8 \rightarrow R_8, R_2 + R_7 \rightarrow R_7, R_3 + R_6 \rightarrow R_6$ and $R_4 + R_5 \rightarrow R_5$ leads to
\begin{align*}
\begin{bmatrix}
14&39&63&16&33&24&40&31\\
56&47&42&19&15&60&1&20\\
44&54&10&7&53&22&8&62\\
59&17&4&35&52&37&29&27\\
65 & 65 & 65 & 65 & 65 & 65 & 65 & 65\\
65 & 65 & 65 & 65 & 65 & 65 & 65 & 65\\
65 & 65 & 65 & 65 & 65 & 65 & 65 & 65\\
65 & 65 & 65 & 65 & 65 & 65 & 65 & 65\\
\end{bmatrix}.
\end{align*}
If we also perform the row operations $-R_5+R_6\rightarrow R_6, -R_5+R_7 \rightarrow R_7$ and $-R_5  + R_8 \rightarrow R_8$, the last 3 rows are zeroed out.  This shows that the rank of the matrix is at most 5.  

Also, the eigenspectrum of (\ref{eq:8by8:typeB}) is $\{260,0,0,0,-53.8553,49.6710, 2.0921\pm 43.6941 i\}$.  Recall that the number of zero eigenvalues of a matrix is equal to the dimension of the null space of the matrix and for an order-$n$ square matrix $\bA$, $\dim(\bA)+\rank(\bA)=n$, so the rank of this matrix is 5.

\section{Magic Square classification and eigenspectra}

Above we discussed that the rotations and reflections of a magic square as well as left- and right-multiplying by a bisymmetric and 90\textdegree-symmetric permutation matrix generates a family of magic squares, however only the bisymmetric ones apply to the next theorem.  

\begin{theorem}
Let $\{\bP_i\}$ be the set of bisymmetric permutation matrices that is the same size as a magic square $\bA$.  Then

\begin{itemize}
	\item $\bA, \bA^T, \bP_i\bA \bP_i, \bP_i \bA^T \bP_i$ each have the same eigenspectrum for each $i$.
	\item $\bJ \bP_i \bA \bP_i, \bP_i \bA \bP_i \bJ, \bP_i \bA^T \bP_i \bJ, \bJ \bP_i \bA^T \bP_i$ each has the same eigenspectrum for each $i$.\end{itemize}	
\end{theorem}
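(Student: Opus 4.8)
The plan is to reduce every claim to two elementary facts about eigenspectra: that a matrix and its transpose have the same characteristic polynomial, and that similar matrices have the same eigenspectrum. The structural observations that make these facts applicable are that each bisymmetric $\bP_i$ is in particular symmetric, so $\bP_i = \bP_i^T = \bP_i^{-1}$ and $\bP_i \bP_i = \bI$, and likewise $\bJ = \bJ^T = \bJ^{-1}$ with $\bJ\bJ = \bI$. As a consequence, conjugation by any of these matrices is a similarity, and one also has the identity $(\bP_i \bA \bP_i)^T = \bP_i^T \bA^T \bP_i^T = \bP_i \bA^T \bP_i$.

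For the first bullet I would argue as follows. Since $\bP_i$ is symmetric, $\bP_i \bA \bP_i = \bP_i \bA \bP_i^{-1}$ is similar to $\bA$ and hence shares its eigenspectrum. The same reasoning applied to $\bA^T$ shows $\bP_i \bA^T \bP_i = \bP_i \bA^T \bP_i^{-1}$ is similar to $\bA^T$. Finally $\bA$ and $\bA^T$ have equal eigenspectra because $\det(\bA - \lambda \bI) = \det((\bA - \lambda \bI)^T) = \det(\bA^T - \lambda \bI)$. Chaining these similarities and the transpose identity gives that all four matrices in the first bullet share a single eigenspectrum.

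For the second bullet I would set $\bB = \bP_i \bA \bP_i$, so that $\bP_i \bA^T \bP_i = \bB^T$ and the four matrices become $\bJ \bB$, $\bB \bJ$, $\bB^T \bJ$ and $\bJ \bB^T$. First, because $\bJ \bJ = \bI$ one has $\bB \bJ = \bJ (\bJ \bB) \bJ = \bJ^{-1}(\bJ \bB)\bJ$, so $\bB \bJ$ is similar to $\bJ \bB$ and the two share an eigenspectrum. Next, using $\bJ^T = \bJ$, I would observe that $\bB^T \bJ = (\bJ \bB)^T$ and $\bJ \bB^T = (\bB \bJ)^T$; by the transpose fact these carry the spectra of $\bJ \bB$ and of $\bB \bJ$ respectively. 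Since those two spectra already coincide, all four matrices in the second bullet share a single eigenspectrum.

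I expect the only real subtlety to be the second bullet: unlike the symmetric factors $\bP_i$, the reversal matrix $\bJ$ does not surround the core $\bB$ symmetrically, so $\bJ \bB$ and $\bB \bJ$ are not related by the two-sided conjugation used in the first bullet. The device that resolves this is the one-sided similarity $\bB \bJ = \bJ^{-1}(\bJ \bB)\bJ$, valid precisely because $\bJ$ is an involution. After that, the remaining work is the careful bookkeeping of which transpose equals which product, and everything else is repeated application of the two eigenspectrum-invariance facts.
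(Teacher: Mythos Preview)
Your proposal is correct and follows essentially the same approach as the paper: both arguments use $\bP_i^{-1}=\bP_i$ to obtain similarity for the first bullet, and for the second bullet both conjugate by $\bJ$ to pass between $\bJ\bB$ and $\bB\bJ$ and then take transposes to reach the remaining two matrices. Your introduction of the shorthand $\bB=\bP_i\bA\bP_i$ and the explicit similarity $\bB\bJ=\bJ^{-1}(\bJ\bB)\bJ$ make the bookkeeping a bit cleaner, but the underlying ideas are identical.
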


\begin{proof}
	Recall that  $\bA$ and $\bA^T$ have the identical eigenspectrum $\{ \lambda \}$. 
	
Since $\bP_i$ is a symmetric permutation matrix, it satifies $\bP_i=\bP_i^{-1}$ so $\bP_i\bA\bP_i$ is similar to $\bA$ and thus has the same eigenvalues.  $\bP_i \bA^T \bP_i$ is also similar to $\bA^T$, so also has the same eigenvalues.  

We now show the latter four matrices listed in the theorem have the same eigenspectrum.  Let the eigenspectrum of $\bJ\bP_i\bA\bP_i$ be $\{ \lambda\}$.  Since $\bJ$ satisfies $\bJ^{-1}=\bJ$, then left- and right-multiplying $\bJ\bP_i\bP\bP_i$ by $\bJ$ is $\bP_i \bP \bP_i \bJ$, therefore this has the eigenspectrum $\{\lambda\}$.  

Lastly, the transpose of $\bJ \bP_i \bA \bP_i$ is
\begin{align*}
( \bJ \bP_i \bA \bP_i)^T & = \bP_i^T \bA^T \bP_i^T \bJ^T = \bP_i \bA^T \bP_i \bJ
\end{align*}
and it has the same eigenspectrum of $\bJ \bP_i \bA \bP_i$.  A similar analysis shows $\bJ \bP_i \bA^T \bP_i$ has the same eigenspectrum.

\end{proof}

This theorem proves that half of the magic squares in a set of this form of magic squares have one set of eigenvalues and the other half has a (possibly) different set.

\section{Conclusions}

The seminal work of Dudeney on categorization of order-4 magic squares and the subsequent work of Trigg on the determinant of each order-4 magic square can be put into a greater framework.  We have extended the family of 8 magic squares formed by rotation and reflection to include transformations by classes of permutation matrices.  In addition, many of the types of order-4 magic squares that Dudeney described can be naturally extended to higher order magic squares. 

As has been noted in many works on magic squares, there are fundamental differences between even and odd-ordered magic squares and there are further differences between singly- and doubly-even magic squares.   We extended the proof that there are no natural singly-even regular magic squares to the fact that there are no natural singly-even type A magic squares.  

In addition, we generalized the work of Mattingly, who show that all even-ordered regular magic squares are singular.  We extended this to all even-ordered types A and B magic squares as well showing some properties of eigenvalues and eigenvectors of type A magic squares.

\subsubsection*{Acknowledgements}

The first author would first like to thank his former students Raymond Macon and Andy Gill, who started getting him thinking about magic squares.  He would also like to thank all of the students in his Mathematics Seminar class at Fitchburg State College during the Spring of 2010.  Some contributed directed to this paper (the last two authors) and everyone in the course contributed indirectly into our thinking.


\begin{thebibliography}{10}

\bibitem{dudeney:1917}
H.~E. Dudeney.
\newblock {\em Amusements in Mathematics}.
\newblock Dover, reprint edition (based on original 1917 manuscript published
  by thomas nelson, london, 1917) edition, 1970.

\bibitem{gardner:1976}
Martin Gardner.
\newblock Mathematical games.
\newblock {\em Scientific American}, 234:118--122, January 1976.

\bibitem{golub:1989}
Gene~H. Golub and Charles F.~Van Loan.
\newblock {\em Matrix Computations}.
\newblock Johns Hopkins, 2nd edition, 1989.

\bibitem{goodaire:2006}
Edgar~G. Goodaire and Michael~M. Parmenter.
\newblock {\em Discrete Mathematics with Graph Theory}, pages 322--324.
\newblock Prentice Hall, 3rd edition, 2006.

\bibitem{lang:1987}
Serge Lang.
\newblock {\em Linear Algebra}.
\newblock Springer-Verlag, 3rd edition, 1987.

\bibitem{loly:2009}
Peter Loly, Ian Cameron, Walter Trump, and Daniel Schindel.
\newblock Magic square spectra.
\newblock {\em Linear Algebra and its Applications}, 430(10):2659--2680, May
  2009.

\bibitem{mattingly:2000}
R.~Bruce Mattingly.
\newblock Even order regular magic squares are singular.
\newblock {\em American Mathematical Monthly}, 107(9), November 2000.

\bibitem{Pasles:2008}
Paul~C. Pasles.
\newblock {\em Benjamin Franklin's numbers: an unsung mathematical odyssey}.
\newblock Princeton University Press, Princeton, N.J., 2008.

\bibitem{pickover:2002}
C.~Pickover.
\newblock {\em The Zen of Magic Squares, Circles and Stars}.
\newblock Princeton University Press, Princeton, NJ, 2002.

\bibitem{pinn:1998}
Klaus Pinn and Christian Wieczerkowski.
\newblock Number of magic squares from parallel tempering monte carlo.
\newblock {\em Int. J. Mod. Phys. C}, 9:541--547, 1998.

\bibitem{planck:1919}
C.~Planck.
\newblock Pandiagonal magic squares of order 6 and 10 with minimal numbers,.
\newblock {\em The Monist}, 29:307--316, 1919.

\bibitem{trigg:1948}
C.~W. Trigg.
\newblock Determinants of fourth-order magic squares.
\newblock {\em Mathematical Monthly}, 1948.

\end{thebibliography}
\end{document}